\theoremstyle{plain}
\newtheorem{theorem}{Theorem}[section]
\newtheorem{lemma}[theorem]{Lemma}
\newtheorem{proposition}[theorem]{Proposition}
\numberwithin{equation}{section}
\newcommand{\B}[1]{\mathbf{#1}}
\newcommand{\TT}[1]{\mathtt{#1}}
\newcommand{\D}[1]{\mathscr{#1}}
\newcommand{\C}[1]{\mathcal{#1}}
\newcommand{\OV}[1]{\overline{#1}}
\newcommand{\To}{\Rightarrow}
\definecolor{color0}{HTML}{4682B4}
\title{Polymorphism and the obstinate circularity \\ of second order logic: a victims' tale}
\author{Paolo Pistone}
\date{}
\begin{document}
\maketitle

\begin{abstract} The investigations on higher-order type theories and on the related notion of parametric polymorphism constitute the technical counterpart of the old foundational problem of the circularity (or impredicativity) of second and higher order logic. However, the epistemological significance of such investigations, and of their often non trivial results, has not received much attention in the contemporary foundational debate.

The results recalled in this paper suggest that the question of the circularity of second order logic cannot be reduced to the simple assessment of a vicious circle. Through a comparison between the faulty consistency arguments given by Frege and Martin-L\"of, respectively for the logical system of the \emph{{Grundgesetze}} (shown inconsistent by Russell's paradox) and for the intuitionistic type theory with a type of all types (shown inconsistent by Girard's paradox), and the normalization argument for second order type theory (or System $\B F$), we indicate a bunch of subtle mathematical problems and sophisticated logical concepts hidden behind the hazardous idea of impredicative quantification, constituting a vast (and largely unexplored) domain for foundational research.
\end{abstract}

\section{Introduction}

It is usually agreed that the history of second order logic starts with Frege's pioneering work, especially in the \emph{Begriffschrift} and in the \emph{Grundgesetze}. In the first text, the German logician introduced a logical syntax with a clear distinction between ``first-level'' and ``second level'' functions. In the second, he introduced a logical semantics centered around the notion of denotation (\emph{Bedeutung}), along with a ``proof'' of what we would call nowadays a soundness theorem, stating that every expression in his system has a (unique) denotation. 

The latter result obviously implied the consistency of Frege's system, as it stated that with every proposition there is associated a unique truth-value. Everybody knows the end of this story: in 1902 Russell constructed an antinomy in Frege's system, that is, a proposition which becomes false as soon as it is true and becomes true as soon as it is false. Not only Russell had contradicted Frege's result that every proposition is associated with a unique truth-value, but he had explicitly constructed a contradiction in the system. In a word, Frege's system was inconsistent and his alleged proof was wrong.

Many years later, in 1971, Martin-L\"of introduced a very general syntax for type theory based on a strongly impredicative axiom, stating the existence of a type of all types. The Swedish logician also provided a semantics for this system centered around the notion of ``computability'', an elegant generalization of previous work by Tait and Girard. Actually, in those years several results in proof theory were dissolving the belief that no constructive approach to higher-order logic were possible. Martin-L\"of's system pursued then the ambitious goal of providing an abstract and uniform formalization of higher-order logic within intuitionistic type theory.

Martin-L\"of's original unpublished paper (\cite{ML70}) contained an alleged proof that all terms in his system are computable. This result obviously implied the consistency of the system, as it stated that, with every well-typed term of the system, there is associated a
unique normal form (and no well-typed term in normal form for the absurdity exists). Unfortunately, history often repeats itself: in the same year, Girard showed that a variant of Burali-Forti's paradox could be typed in Martin-L\"of's system, producing a term having no normal form. Not only Girard had contradicted Martin-L\"of's result that every term has a unique normal form, but he had explicitly constructed a contradiction in the system. In a word, Martin-L\"of's system was inconsistent and his alleged proof was wrong.

Very instructive insights into the much debated issue of the circularity (or impredicativity) of second order logic are provided by these two surprisingly similar, and equally unfortunate, episodes about such two great logicians. It is often argued that Frege's consistency argument succumbed to what Russell called the vicious circle principle of second order logic, that is the fact that the second order quantifier quantifies over a class of propositions including those which are defined by means of that very operator. In the case of Martin-L\"of's argument the circularity in question is more difficult to identify, as no apparent vicious circle occurs in the proof.

Nowadays, the vicious circles of impredicative reasoning, far from being eradicated from the logical world, appear not only in mathematics, but also in the instructions of many programming languages (under the names of polymorphism, generic programming, templates). From a theoretical question concerning the foundations of mathematics, the philosophical problem of impredicativity has evolved into a technical issue in the design of abstract and uniform programming tools.

As is well-known, a substantial symmetry between \emph{methods of proof} and \emph{methods of programming} was revealed by the so-called \emph{propositions-as-types} or \emph{Curry-Howard} (\cite{Curry1958, Howard80}) correspondence between natural deduction systems and typed $\lambda$-calculi. It is such a symmetry which makes it possible that the normalization theorem for System $\B F$ (a typed -calculus implementing second order polymorphism), whose proof constitutes the \emph{trait d'union} of our reconstruction, has a double interpretation: as a result in computer science, it provides the grounds for our confidence that the execution of higher-order, impredicative, programs, ultimately halts; as a result in logic, it warrants the consistency of second order logic in a very strong, proof-theoretical, sense.

This theorem is just one of a series of foundational results (like Reynolds' theory of parametric polymorphism or the denotational interpretation of polymorphism) that had a significative echo in proof theory and theoretical computer science, but went practically unnoticed in the philosophical debate on impredicativity. It seems, however, hard to deny that an up-to-date evaluation of the old question of impredicativity passes through an epistemological analysis of these advances.

In our reconstruction a characterization is attempted of different forms of circularity, which Russell's 110 years old diagnosis seems incapable to distinguish. In particular, a distinction is made between the circularity of which Frege was victim: that of establishing the truth of a proposition which refers to all propositions, and the circularity of which Martin-L\"of was victim: that of establishing that an impredicative rule is sound by employing, in the argument, occurrences of a similarly impredicative rule.

This paper is intended both as a survey of some important results in the proof-theory of second order logic and polymorphism, which have not received much attention in
the philosophical debate, and as a starting point for a discussion of their foundational significance.

\section{A programmer in the barber's shop.}\label{sec::1}
 The problem of circularity, or impredicativity, is here historically introduced by recalling Russell's diagnosis of the antinomies, based on the individuation of a specific form of circularity, namely the violation of the \emph{vicious circle principle}. Type theory was introduced by Russell to let propositions obey the vicious circle principle and to prohibit self-application. However the developments of type theory, which had more success in theoretical computer science than in foundations, betrayed the foundational concerns that had led to its introduction: polymorphic type disciplines reintroduce impredicativity and self-applications in a typed-setting. In particular, the \emph{Curry-Howard correspondence}, here briefly recalled, shows that polymorphism and higher-order quantification are actually two ways to look at the same phenomenon.

The problem of impredicativity is decomposed into a \emph{metaphysical problem}, related to the existence of impredicatively defined entities, and an \emph{epistemic problem}, related to the possibility of ascertaining the truth or proving an impredicative proposition. Concern-ing the latter, two forms of circularity are retrieved (to which section 2 and section 3 will be devoted, respectively): a circularity \emph{in the definition} of the truth or provability conditions for impredicative propositions (related to the problem of providing meanings or denotations to higher-order propositions), and a circularity \emph{in the elimination} of redundant impredicative concepts in a proof (related to the problem of establishing the \emph{Hauptsatz} for higher-order logic).

\subsection{Russell's \emph{vicious circle principle}.} A quite general way to express the prob-lem exposed by the antinomies in Frege's logical system and in na\"ive set theory is the following: in every logical system in which sets can be defined by means of any clause of the form \emph{the set of all sets such that $\varphi$} where $\varphi$ is a property of sets, an antinomy, that is, a sentence logically equivalent to its negation, can be derived\footnote{Remark that, if $s$ is the set of all sets such that $\varphi$, then asking whether $s$ satisfies $\varphi$ is equivalent to asking whether $s$ belongs to $s$.}.

Since in standard (minimal, intuitionistic or classical) logics, a contradiction can be derived from an antinomy\footnote{However, this is not the case for some logics like \emph{Elementary Linear Logic} and \emph{Light Linear Logic} (\cite{Girard1996}), in which structural rules are controlled in a special way (for reasons connected with the theory of implicit computational complexity), giving rise to different analyses of the paradoxes.}, one must conclude that the appeal to definitions of this form has to be restricted in some way.

As is well-known, Russell (\cite{Russell1906}) and Poincar\'e (\cite{Poincare1906}) suggested that \emph{all} such definitions should be forbidden, as they violate the so-called \emph{vicious circle principle} $\B{VCP}$, stated by Russell as follows:

\begin{quote}\small
Whatever in any way concerns all or any or some of a class must not itself be one of the members of that class. \cite{Russell1906}
\end{quote}

In \cite{Godel1944} G\"odel observes that the core of the debate on the $\B{VCP}$ actually lies in a special case of the vicious circle principle, namely the case in which one \emph{defines} a set by quantifying over a class of which the new defined set is a member. In particular, it is this special case which forbids to define sets by means of clauses of the form \emph{the set of all
sets such that $\varphi$}, as the definition makes reference to the totality of sets, hence including the set here defined. We can restate thus the $\B{VCP}$ as follows:

\begin{quote}\small
Whatever can only be defined in terms of all or any or some of a class must not itself be one of the members of that class
\end{quote}

We will call \emph{impredicative} a clause defining a given notion and violating the $\B{VCP}$.

On the one side, the $\B{VCP}$ prevents constructions like those leading to Russell's para-dox, as it forbids definitions of the form mentioned above. On the other side, this principle forbids basic second order constructions:

\begin{description}
\item[$i.$] the definition of a proposition by quantification over all propositions: for instance, the propositions \emph{every proposition is either true or false} and \emph{some proposition implies its own negation}; this is explicitly observed in \cite{Russell1910}:

\begin{quote}\small
The vicious circles in question arise from supposing that a collection of objects may contain members which can only be defined by means of the collection as a whole. Thus, for example, the collection of propositions will be supposed to contain a proposition stating that ``all propositions are either true or false''. It would seem, however, that such a statement could not be legitimate unless ``all proposition'' referred to some already definite collection, which it cannot do if new propositions are created by statements about ``all proposition''. We shall, therefore, have to say that statements about ``all propositions'' are meaningless. \cite{Russell1910}
\end{quote}

\item{$ii.$} the definition of a set as the intersection of all sets satisfying a certain property: for instance the intersection of all infinite sets, that we can equivalently call the smallest infinite set. This is remarked for instance in \cite{Godel1944},
\begin{quote}\small
[...] if one defines a class as the intersection of all classes satisfying a certain condition $\varphi$ and then concludes that is a subset also of such classes u as are defined in therms of (provided they satisfy $\varphi$). \cite{Godel1944}
\end{quote}
\end{description}

We will call impredicative a proposition containing a quantification over all propositions as well as any set defined as the intersection of all sets satisfying a given property.
Hence, if one endorses the $\B{VCP}$ as a way-out of the paradoxes, and in particular if one endorses the view that any legitimate logical system must obey the $\B{VCP}$, then he must count second order logic as not legitimate as logic, as it allows for ``viciously circular'' definitions.

It must be stressed that the validity of the $\B{VCP}$ is not implied by the existence of the antinomies. For instance, in the standard set theory $\B{ZF}$, a theory devised in order to avoid the paradoxes of na\"ive set theory, definitions violating the $\B{VCP}$ are possible: let $\varphi$ be any property stable by intersection\footnote{That is, such that, if $(a_{i})_{i\in I}$ is a collection of sets indexed by a set $I$, and $\varphi(a_{i})$ holds for all $i\in I$, then $\varphi(\cap_{i\in I}a_{i})$ holds. One can take as $\varphi$ the property of being a subset of a given set $t$.
} given a set $s$, one can define a new set $s'$ as the smallest subset of $s$ satisfying $\varphi$, i.e. as the intersection of all subsets of $s$ satisfying $\varphi$. Then $s'$, which belongs to the collection of sets (the subsets of $s$), to which its definition makes reference, satisfies $\varphi$.

What form should then a logical system in accordance with the $\B{VCP}$ have? In a later text (\cite{Russ08}) Russell expressed the $\B{VCP}$ in a more syntactic way as follows:

\begin{quote}\small
Whatever contains an apparent variable must not be a possible value of that variable. \cite{Russ08}
\end{quote}

If $s$ is the set of all sets such that $\varphi$, then in the definition of s a bound variable occurs, standing for elements of the range of $\varphi$, i.e. sets, of which s is a possible value. In order to avoid such constructions, Russell proposed the introduction of a \emph{type discipline}. All expressions in the language are assigned a type, following the two principles below:

\begin{description}

\item[T1:] the range of application of a function forms a type;

\item[T2:] if the range of a function $f$ is $\sigma$, then $f$ has a higher type $\sigma\to \tau$ , where $\tau $ is the type of the image of $f$.

\end{description}

With Russell's type disciple, propositions, that is, those expressions which are either true or false, are simply those expressions which receive the type $prop$, the type of propositions.

The type discipline given by $\B{T1-2}$ prevents the application of a function to itself: if a function has range $\sigma$, then it must have a type of the form $\sigma\to \tau$, hence it cannot be among its possible values. Similar considerations for sets can be made, as soon as one identifies a set with the property defining it (which is a function from a given range to truth values): as the function cannot belong to its own range, a set cannot belong to itself.

However, a type-discipline in accordance with the principles $\B{T1-2}$ in which the second order construction $i.$ is well-typed (that is, where an impredicative proposition $\forall XA $ is given type $prop$) can be devised, by stating that, if $A[X]$, where $X$ has type $prop$, is a function of type $prop \to prop$, then $\forall XA$ has type $prop$ (that is, by stating that the second order quantifier $\forall$ is a function of type $(prop \to prop) \to prop$); this discipline, usually called \emph{simple type theory} (\cite{Russell1903}, see next section), is compatible with $\B{T1-2}$ but violates the $\B{VCP}$: the proposition $\forall XA$ appears among the possible values of $X$.

A more rigid type discipline was introduced by Russell in order to forbid impredicative quantification: rather than considering a unique type $prop$ for propositions, he introduced a hierarchy of types $prop_{n}$ for propositions of different complexities, and stated that, if $A[X]$ has type $prop_{n} \to prop$, then $\forall^{n}XA$ has type $prop_{n+1}$, i.e. it is a proposition of strictly higher complexity (that is, for any $n$, the quantifier $\forall^{n}$ is of type $(prop_{n} \to prop) \to prop_{n+1}$). This discipline, called \emph{ramified type theory} (\cite{Russ08}), does not violate the $\B{VCP}$, since, in the proposition $\forall ^{n}XA$, $\forall^{n}XA $ itself does not appear among the possible values of $X$.

The propositions typable in simple type theory yield a quite expressive system, including second order logic. This means that, even if one accepts, as Russell did, that the $\B{VCP}$ forces to accept the type discipline $\B{T1-2}$, the converse does not hold: the type discipline T1 2 is compatible with the existence of impredicative types. Ramified type theory yields a much less expressive system, which does not include full second order logic\footnote{In particular, it is incapable of representing quite basic arithmetical operations like exponentiation.
}. As is well-known, Russell and Whitehead's solution in the Principia was a com-promise between the two: a ramified type theory with the \emph{axiom of reducibility}, which implies that, for any proposition of type $prop_{n}$ there exists an equivalent proposition of type $prop_{0}$.

\subsection{Polymorphism: the ``non-Russellian'' type discipline of second order proofs.} The success of Russell's type discipline as a new foundation for mathematics
(and especially set-theory) was quite limited. As we saw, the dominant foundational theory, $\B{ZF}$, is not in accordance with the $\B{VCP}$. However, his ideas had a significant appeal in the development of theoretical computer science, as modern type theory constitutes the foundations of the theory of programming languages. At the same time, consistent type disciplines allowing for impredicative types and self-application of functions were introduced through the notion of polymorphism. In a word, type theory progressively rejected the foundational concerns which had been the very reason of its introduction.

A central position in the path leading from Russell's type theory to modern type theories (a complete reconstruction of this path obviously exceeds the purposes of this paper - the reader can look for instance at \cite{Kamare2004}) is occupied by the development of \emph{typed $\lambda$-calculi} as abstract functional programming languages.

In the \emph{simply typed $\lambda$-calculus} $\lambda_{\to}$, first proposed by Church (\cite{Church}), terms are defined in accordance with the type discipline $\B{T1-2}$\footnote{We recall that $\lambda_{\to}$ terms, also called $\lambda$-terms, are generated, starting from a collection, for any type $\sigma$, of variables $x, y, z\in \sigma$ of that type, by two operations: \emph{abstraction}, which, given a term $M^{\tau}$ and a variable $x\in\sigma$, forms a term $(\lambda x\in \sigma)M^{\tau}$ , of type $\sigma\to \tau$; \emph{application}, which, given a term $M^{\sigma\to \tau}$ and a term $N^{\sigma}$ , forms a term $M^{\sigma\to \tau} N ^{\sigma}$, of type $\tau$. Moreover, the execution of terms is governed by the
reduction rule $((\lambda x\in \sigma)M^{\tau})N^{\sigma} \ \to \  M^{\tau}[N^{\sigma}/x\in \sigma]$, which computes the result of applying a function constructed by the abstraction operation.
}. 
In $\lambda_{\to}$ no well-typed program can be applied to itself: if $M$ is a program of type $\sigma$, then, in order for the application $MM$ to be well-typed, one should also have that $M$ has type $\sigma\to \tau$, for some type $\tau$, which is impossible, since programs are assigned a unique type. Hence, Russell's ban on self-application has a direct translation in $\lambda_{\to}$.

The fact that programs come with a unique type is sometimes referred to as static typing. Contrarily to Russell's type discipline, in modern programming languages there exist programs which can be assigned more than one (actually, infinitely many) types: this \emph{polymorphism} of programs gives rise to type disciplines which, while appearing, at a first glance, quite natural from a programming point of view, are extremely challenging from the point of view of logic.

The natural aspect of polymorphism is shown by the following example: suppose we want to write a program $\TT{ID}$ which, when applied to an arbitrary other program $\TT P$, gives as output the program $\TT P$. The behavior of $\TT{ID}$ is then governed by the simple equation

$$\TT{ID\  P} \ =\  \TT P$$

To represent this very simple program in $\lambda_{\to}$, we are forced to make an apparently arbitrary choice, namely the one of the range of $\TT{ID}$. In other words, we are forced to restrict the programs to which $\TT{ID}$ can be applied to those belonging to a given type . Hence, for any choice of a type $\sigma$, a different coding of our program in $\lambda_{\to}$ must be given, by the $\lambda$-term $\TT{ID}_{\sigma} = (\lambda x\in \sigma)x$, of type $\sigma\to \sigma$. However, the program is so simple that it seems natural to look for a uniform coding of $\TT{ID}$ such that, for any type $\sigma$, we can say that $\TT{ID}$ can be given the type $\sigma\to \sigma$.

It seems natural then to look for programs which are, so to say, so simple that they can work \emph{at any type}. Let us make the hypothesis that a uniform and polymorphic coding of $\TT{ID}$ exists, and let us fix a type $\sigma_{0}$; as $\TT{ID}$ can be given type $\sigma\to \sigma$ for any type $\sigma$, then it can be given the type $\sigma_{0}\to \sigma_{0}$, but also the type $(\sigma_{0}\to \sigma_{0})\to (\sigma_{0}\to \sigma_{0})$; this means that one of the types of $\TT{ID}$ is actually one of the possible ranges of $\TT{ID}$, which
authorizes the application $\TT{ID \ ID}$ of $\TT{ID}$ to itself. In other words, as soon as one follows the intuition that a simple enough program can be given infinitely many types, Russell's ban on self-applications is violated. Actually, the polymorphic intuition is at work in most common programming languages, pace Russell.

A first, informal, description of this phenomenon appeared in a 1967 seminal text by Strachey, in which the notion of parametric polymorphism (i.e. the fact that a term can have infinitely many types) was first introduced:

\begin{quote}\small
Parametric polymorphism [...] may be illustrated by an example. Suppose $\TT f$ is a function whose argument is of type $\alpha$ and whose results is of type $\beta$ (so that the type of $\TT f$ might be written $\alpha\to \beta$), and that $\TT L$ is a list whose elements are all of type $\alpha$ (so that the type of $\TT L$ is $\alpha\TT{list}$). We can imagine a function, say $\TT{Map}$, which applies $\TT f$ in turn to each member of $\TT L$ and makes a list of the results. Thus $\TT{Map}[\TT f, \TT L]$ will produce a $\beta\TT{list}$. We would like $\TT{Map}$ to work on all types of lists provided $\TT f$ was a suitable function, so that $\TT{Map}$ would have to be polymorphic. However its polymorphism is of a particularly simple parametric type which could be written
$$(\alpha\to \beta, \alpha\TT{list})\to \beta\TT{list}$$
where $\alpha$ and $\beta$ stand for any types. \cite{Strachey1967}
\end{quote}

In definitive, a polymorphic program is one which can be given an infinite class of types, all of the form $\sigma[\tau/X]$, where $\sigma$ is an opportune type and varies among all types.

Strachey's ideas were formalized in 1971 by Girard (\cite{Girard72}) (and, independently, in 1974 by Reynolds \cite{Reynolds74}) with the introduction of System $\B F$, or polymorphic $\lambda$-calculus. The type discipline of System $\B F$ differs from Russell's one for the fact that types contain variables, and a variable intuitively stands \emph{for all} its substitution instances. Hence when a function is assigned a range $\sigma$ and the variable $X$ occurs freely in $\sigma$, this means that the function can be assigned all types $\sigma[\tau/X]$, for any type $\tau$. To the clauses $\B{T1-2}$ a third and a fourth clause are added:

\begin{description}
\item[T3:] if a function can be assigned the type $\sigma$, where the variable $X$ occurs freely in $\sigma$\footnote{And moreover X does not occur free in any open assumptions.}, then it can be assigned the type $\forall X\sigma$ ;

\item[T4:] if a function has type $\forall X\sigma$ then, for all type $\tau$, it has also type $\sigma[\tau/X]$.
\end{description}

More precisely, in addition to the operations of abstraction and application of $\lambda_{\to}$, two new operations are present in System $\B F$: \emph{type abstraction}, which, given a term $M^{\sigma}$ of type $\sigma$ and a type variable $X$ occurring freely in $\sigma$\footnote{And moreover X does not occur free in any other type declaration.
}, forms a term $\Lambda X.M^{\sigma}$, of type $\forall X\sigma$; \emph{type extraction} which, given a term $M^{\forall X\sigma}$ of type $\forall X\sigma$ and a type $\tau$, forms a term $M \{\tau\}$, of type $\sigma[\tau/X]$. To the reduction law governing the execution of programs
a new reduction law is added: $(\Lambda X.M^{\sigma})\{\tau\} \ \to \ M[\tau/X]^{\sigma[\tau/X]}$, which computes the result of extracting over a type a program previously abstracted over all types\footnote{It's worth mentioning that System $\B F$ (as well as $\lambda_{\to}$) can be presented in the so-called \emph{\`a la Curry} version, as a type discipline over untyped $\lambda$-terms. Untyped $\lambda$-terms are defined similarly to the typed ones, but without type annotations. Starting from untyped variables $x,y,z,\dots$, the abstraction operation allows to form a term $\lambda x.M$ starting from a term $M$ and a variable $x$; the application operation allows to form a term $M N$ starting from two arbitrary terms $M$ and $N$. In the untyped case, the typing conditions for System $\B F$ do not affect types. In the case of type abstraction, if $M$ is a term of type , then $M$ itself is a term of type $\forall X\sigma$ (provided the variable $X$ does not occur free in all the type declarations for the free variables of $M$); in the case of type extraction, if $M$ is a term of type $\forall X\sigma$, then, for all type $\tau$, $M$ itself is a term of type $\sigma[\tau/X]$. In this system (which is equivalent to the typed version) polymorphism appears in a more literal way: the same untyped $\lambda$-term might have infinitely many types.\label{footnote::8}
}.

The definition of impredicative types of the form $\forall X\sigma$ must be distinguished from the fact of typing impredicative propositions: while the latter can be done in simple type theory, the former requires a proper extension of the simple type discipline. One can then ask what theory results by considering those propositions which can be well-typed in a polymorphic (say System $\B F$) type discipline: this hugely impredicative theory, called System $\B U^{-}$ in \cite{Girard72}, is much more expressive than System $\B F$ alone and will be discussed in section 3, in connection with Martin-L\"of's impredicative type theory.

The program $\TT{ID}$ has a unique representation in System $\B F$, by means of the $\lambda$-term $\Lambda X.(\lambda x\in X)x$ which has type $\forall X(X\to X)$. The type-dependent representations of $\TT{ID}$ in $\lambda_{\to}$ are easily obtained from $\TT{ID}$ by type extraction: $\TT{ID}_{\sigma} = \TT{ID}\{\sigma\}$. Moreover, by performing two different extractions, the term $\TT{ID}$ can be correctly applied to itself, yielding the program $(\TT{ID}\{X\to X\})\TT{ID}\{X\}$\footnote{In the version \emph{\`a la Curry}, the program $\TT{ID}$, as well as all programs $\TT{ID}_{\sigma}$, is represented by the untyped $\lambda$-term $\lambda x.x$. Moreover, the term $(\lambda x.x)\lambda x.x$, corresponding to the self-application of $\TT{ID}$ to itself, can be well-typed.
}.

Similarly, the program $\TT{Map}$ mentioned by Strachey has a unique representation, of type $\forall X\forall Y((X\to Y)\to (X\TT{list}\to Y\TT{list}))$\footnote{where $X\TT{list}$ is the type $\forall Y(Y\to (X\to Y\to Y)\to Y)$whose elements are either the empty list $\epsilon= \Lambda Y.(\lambda x\in Y)(\lambda c\in X\to Y\to Y)x$ or the concatenation $\TT{conc}(\TT l, \TT a)= \Lambda Y.(\lambda x\in Y) \lambda c\in X\to Y\to Y) c \TT a((\TT l\{Y\})xc)$ of an object $\TT l$ of type $ X\TT{list}$ and an object $\TT a$ of type $X$.
}. Also the program $\TT{Map}$ can be applied to itself, as soon as one chooses the type $(X\to Y)$ and $(X\TT{list}\to Y\TT{list})$ as values, respectively, of $X$ and $Y$.

Though this type discipline allows for constructions which clearly violate the $\B{VCP}$, paradoxical constructions like Russell's paradox cannot be typed in System $\B F$. In particular, Girard showed that, similarly to the much weaker calculus $\lambda_{\to}$, the execution of any program in System $\B F$ reduces to a unique normal form (Girard's normalization theorem will be reconstructed in detail in the next section). As we will discuss in the next subsection, from a logical point of view, this result establishes the consistency of second order logic (and of second order arithmetics) in a very strong proof-theoretic sense.

\subsection{The epistemic problem of impredicativity.} In the debate on the $\B{VCP}$ two different, though related, problems can be distinguished: the problem concerning the existence of the objects defined by impredicative clauses, that we will call the \emph{metaphysical problem}; the problem concerning the ways to prove that some objects belong to a set defined by an impredicative clause, or that some impredicative proposition is true, that we will call the \emph{epistemic problem}.

The metaphysical problem can be stated as follows: how can an impredicatively defined set be said to exist, if its definition presupposes the existence of a totality of sets, con-taining the set it defines? The answer seems to depend upon the philosophical position about the nature of mathematical objects one adopts. For instance, a problem of course occurs if one endorses the view that mathematical objects owe their existence (if any) to their definition, either intended in purely formalistic way, or intended as a representation of some form of (mental or linguistic) construction. If, however, as G\"odel remarks,

\begin{quote}\small
it is a question of objects that exist independently of our constructions, there is nothing in the least absurd in the existence of totalities containing members, which can be described (i.e. uniquely characterized) only by reference to this totality. \cite{Godel1944}
\end{quote}

Hence, for instance, no problem of existence would arise from a platonistic standpoint. The analysis of impredicativity in the pages that follow will not take into consideration the metaphysical problem. Our focus is indeed directed towards some the issues which arise in the attempt at defining truth-conditions or proof-conditions for impredicative propositions.  These issues are captured by the epistemic problem, which arises from the remark that, when one argues for the truth of a proposition quantifying over all propositions, or for the fact that some set belongs to the set of all sets such that $\varphi$, one might enter an infinite regress.  For instance, if one wants to know whether the proposition $P$ : \emph{every proposition is either true or false} is true or false, one has to verify, for every proposition, whether it is true or false, hence in particular one has to verify this
for the proposition $P$, ending in a regress.

In a famous paper (\cite{Poincare1906}), Poincar\'e identifies a circularity in reasoning about impredicative concepts starting from a broad conception of logic as an essentially ``sterile'' activity: the French mathematician argues that a purely logical proof is one which can be trans-formed into a series of tautological propositions, once the expressions involved in it are replaced by their definitions. This is in contrast, for the author, with mathematical proofs, which do not reduce to tautologies but to propositions the acknowledgement of whose truth requires an appeal to intuition. For instance, he insists that, if one has logically proved an equality of the form $a = b$ , then he must be able to reduce this equality, by progressively substituting concepts by their definition, into tautological equalities of the form $c = c$.

\begin{quote}\small
Mais si l'on remplace successivement les diverses expressions qui y figurent par leur d\'efinition et si l'on poursuit cette op\'eration aussi loin que l'on le peut, il ne restera plus \`a la fin que des identit\'es, de sorte que tout se r\'eduira \`a une immense tautologie. La Logique reste donc st\'erile, \`a moins d'\^etre f\'econd\'ee par l'intuition. \cite{Poincare1906}
\end{quote} 

Once established this, Poincar\'e argues that the replacement of an impredicatively defined concept with its definition might fail to produce a series of tautologies. For instance, the truth of the proposition $P$ above can be defined by stating that $P$ is true when, for any proposition $Q$, $Q$ is either true or false. Now, to ascertain whether $P$ is true, we substitute the proposition ``$P$ is true'' by its definition, and we find the proposition ``$P$ is either true or false'' among all possible $Q$, ending in a regress which prevents the reduction to a series of tautologies. Poincar\'e, notoriously, concludes that:

\begin{quote}\small
Dans ces conditions, la Logistique n'est plus st\'erile, elle engendre l'antinomie. \cite{Poincare1906}
\end{quote}

Poincar\'e's argument is sometimes recalled in order to show that impredicative propositions must be considered viciously circular from a constructivist standpoint. For instance, Sundholm refers to this argument when explaining why, in his opinion, the meaning conditions for impredicative propositions are circular:

\begin{quote}\small
A meaning-explanation for the second order quantifier begins by stipulating that $(\forall X : Prop)A$ has to be a proposition under the assumption that $A$ is a propositional function from $Prop$ to $Prop$, that is, that $A : Prop$, provided $X : Prop$. One then has to explain, still under the same assumption, which proposition it is:

$$(\forall X : Prop)A\text{ is true if and only if }A[P = X]\text{ is true, for each proposition }P$$

In the special case of $(\forall X : Prop)X$ one obtains

$$(\forall X : Prop)X\text{ is true }=_{def}\text{ $ P$ is true, for each proposition } P$$
but $(\forall X : P rop)X$ is (meant to be) a proposition, so it has to be considered on the righthand side. Accordingly (4.2.2) cannot serve as a definition of what it is for $(\forall X : P rop)X$ to be true; it does not allow for the elimination, effective or not, of ...is true when applied to the alleged proposition $(\forall X : Prop)X$. \cite{Sundholm1999}
\end{quote}

A different view is advocated by Carnap (a well-known defender of impredicativity), who remarks that, if we really endorse the ``sterility'' view of logic and reject impredicativity in proofs, then we are forced to reject even basic arithmetical proofs. Indeed, an infinite regress arises even when one tries to argue that the number three is inductive (i.e. that for all property P which holds of zero and holds of $n + 1$ as soon as it holds of $n$, $P$ holds of three):

\begin{quote}\small
For example, to ascertain whether the number three is inductive, we must, according to the definition, investigate whether every property which is hereditary and belongs to zero also belongs to three. But if we must do this for every property, we must also do it for the property ``inductive'' which is also a property of numbers. Therefore, in order to determine whether the number three is inductive, we must determine among other things whether the property ``inductive'' is hereditary, whether it belongs to zero and, finally - this is the crucial point - whether it belongs to three. But this means that it would be impossible to determine whether three is an inductive number. \cite{Carnap1983}
\end{quote}

A dilemma seems to come out of this argument: either one accepts that even some basic arithmetical arguments (like the inductive proof that three is a natural number) are flawed, since circular, or one must admit that the intuition that ascertaining the truth (or proving) an impredicative proposition amounts at ascertaining the truth (or proving) all its substitution instances is flawed, since circular. In a word, if we don't want to concede that the whole edifice of mathematics lays on loose ground, we must admit that the intuitive understanding of the conditions for proving impredicative propositions exemplified by the regress argument above is not correct.

The intuitive understanding can be expressed by the following thesis: to prove a statement of the form $\forall XA$ is to prove that $A[C/X]$ holds, where $C$ varies over all admissible substitution instances of $X$. Instead, Carnap sketches the view that proving $\forall XA$ amounts to something like proving $A[X]$ on the assumption that $X$ is an arbitrary, or generic, proposition. This understanding is explicitly attacked by Carnap:

\begin{quote}\small
If we had to examine every single property, an unbreakable circle would indeed result, for then we would run headlong against the property ``inductive''. Establishing whether something had it would then be impossible in principle, and the concept would be meaningless. But the verification of a universal logical or mathematical sentence does not consist in running through a series of individual cases [...] The belief that we must run through all individual cases rests on a confusion of ``numerical generality'' [...] We do not establish specific generalities by running through individual cases but by logically deriving certain properties from certain others. [...]

If we reject the belief that it is necessary to run through individual cases and rather make it clear to ourselves that the complete verification of a statement means nothing more than its logical validity for an arbitrary property, we will come to the conclusion that impredicative definitions are logically admissible. \cite{Carnap1983}
\end{quote}

Also G\"odel comments upon this way out of the problem:

\begin{quote}\small
[...] one may, on good grounds, deny that reference to a totality implies reference to all single elements of it or, in other words, that ``all'' means the same as an infinite logical conjunction. One may [...] interpret ``all'' as meaning analyticity or necessity or demonstrability. There are difficulties in this view; but there are no doubts that in this way the circularity of impredicative definitions disappears. \cite{Godel1944}
\end{quote}

To sum up, the epistemic problem of impredicativity revolves around an apparent circularity in the definition of the truth or provability conditions for propositions quantifying over all propositions: if one accepts the intuition that the conditions for assessing the truth or proving a statement of the form $\forall XA$ are given in terms of the conditions for assessing the truth or proving all possible substitution instances $A[B/X]$, then apparently no non-circular definition of such conditions can be given, as the proposition $\forall XA$ appears among the possible instances of $X$.

As G\"odel comments, saying that the conditions for proving (or assessing the truth) of $\forall XA$ obtain when the conditions for proving (or assessing the truth of) all the substitution instances obtain is the same as saying that the conditions for proving (or assessing the truth) of $\forall XA$ are provided by an infinite conjunction of conditions. Hence, the problem can be restated as follows: as soon as the explanation of the second order quantifier is given in terms of an infinite conjunction, that is, as soon as one accepts that a rule of the form

$$
\AXC{$\dots$}
\AXC{$A[B/X]$}
\AXC{$\dots$}
\RL{$B$ proposition}
\TrinaryInfC{$\forall XA$}
\DP
$$
provides a good explanation of the meaning of the second order quantifier, as $\forall XA$ (or a more complex statement) might occur among its immediate premisses, a problem of circularity \emph{in the definition} appears.

\subsection{Consistency proofs and the harmony between the finite and the infinite.} Hilbert's view of mathematics was based on a distinction between \emph{finitary} and \emph{infinitary} propositions: those which can be expressed in an (essentially quantifier-free) arithmetical language and those which require a more complex language, containing quantifiers ranging over infinite domains (natural numbers, real numbers, sets, \dots). In Hilbert's perspective, the first are directly meaningful and ``safe'' from a foundational perspective, as their truth can be assessed by a finite computation. On the contrary, as no direct computation can be employed to verify a proposition quantifying over an infinite domain, the use of the second must be vindicated by means of a consistency proof, showing in particular that any finitary proposition established using infinitary concepts can be also established by purely finitary means. In a word, a consistency proof should prove that any apparently redundant use of an infinitary concept can be ``eliminated''\footnote{That consistency follows from this elimination result can be seen as follows: if a proof of the absurdity $0 = 1$ exists, then a finitary proof of $0 = 1$ must also exist, which is obviously false.
}.

It is widely acknowledged that G\"odel's second theorem implies a refutation of Hilbert's reductive goal, in the sense that a reductive proof of consistency for a system containing infinitary propositions cannot be carried over within finitary mathematics. More precisely, G\"odel's second theorem does not refute the possibility of showing that redundant uses of infinitary concepts can be eliminated, but shows that a proof of this fact must
itself employ infinitary concepts\footnote{A typical example is provided by Gentzen's consistency proof of arithmetics by means of a finitary system (primitive recursive arithmetics $P RA$) augmented with the infinitary principle of transfinite induction over a recursive well-order of order type $\epsilon_{0}$.
}. In a sense, the conclusion that no false finitary con-sequence can be drawn from infinitary, unsafe, propositions, must itself be drawn from some infinitary, unsafe, proposition.

A further elaboration of Hilbert's reductive ideal appears in later proof theory, when, after Gentzen's work, the eliminability of redundant steps (called cuts or detours) in a proof became the fundamental goal of the proof-theoretic analysis of logical systems. Gentzen had shown that, in propositional (and first-order) logic, one can devise a systematic strategy to eliminate, in a proof, all redundancies which appear when some consequence is drawn from a proposition whose principal connective has just been introduced. This result, called \emph{Hauptsatz}, is the cornerstone of the proof-theoretic explanation of deduction. It has, as its main consequence, the consistency of the logical system, since, on the one hand, it asserts that any proof can be transformed into a detour-free (or cut-free) proof and, on the other hand, a proof of a contradiction must always contain a detour\footnote{In a cut-free proof of the absurd the premisses must be a formula $A$ and its negation $\lnot A$; now, the principal connective $\lnot$ of the formula $\lnot A$ must be introduced in its cut-free proof; hence this concept, $\lnot$, is introduced and eliminated in the proof, contradicting the hypothesis that the proof has no detour.
}. Moreover, it implies that a proof of a finitary proposition can be transformed into one which contains finitary propositions only (as any introduction of an infinitary proposition would be redundant and, thus, eliminable)\footnote{This is a consequence of the \emph{subformula property} of (first-order) cut-free proofs: this property asserts that all propositions occurring in the proof are subformulas of the conclusion.
}.

It is often advocated (see \cite{Prawitz1971, Dummett1991}) that the result that the \emph{Hauptsatz} obtains for a given logical system counts as a justification of the rules of that system. For instance, the philosopher Michael Dummett argues that at the basis of a proof-theoretical justification of deduction is the fact that the conditions for proving logical statements should be self-explanatory (``st\'erile'', to quote Poincar\'e), in a precise sense provided by the notion of \emph{harmony}:

\begin{quote}\small
The requirement that this criterion for harmony be satisfied conforms to our fundamental conception of what deductive inference accomplishes. An argument or proof convinces us because we construe it as showing that, given that the premisses hold good according to our ordinary criteria, the conclusion must also hold according to the criteria we already have for its holding. \cite{Dummett1991}
\end{quote}

Following Gentzen's notorious remark:

\begin{quote}\small
The introductions represent, as it were, the ``definitions" of the symbols concerned, and the eliminations are no more, in the final analysis, than the consequences of these definitions. This fact may be expressed as follows: in eliminating a symbol, the formula, whose terminal symbol we are dealing with, may be used only ``in the sense afforded it by the introduction of that symbol.'' \cite{Gentzen64}
\end{quote}

Dummett takes the introduction rule for a connective as a \emph{definition} of the meaning of the connective: the conclusion of the rule is the \emph{definiendum}, the premisses of the rule constitute its \emph{definiens}. Thus, if some consequences are drawn from a proposition A that we have previously introduced (according to the introduction rule of its principal connective) then it must be possible, by harmony, to draw the same consequences from
the propositions appearing as premisses in the rule introducing $A$; in a word, it must be possible to eliminate the connective just introduced in the derivation of its consequences by replacing it by its definiens, i.e. the premiss of the introduction rule. The justification of a logical system by means of a reduction argument, along with the thesis that introduction rules play the role of definitions of the logical constants, can be compared with Poincar\'e's idea that deductive argument must be reducible to tautological ones.

Since Gentzen's original argument, the arguments to prove the \emph{Hauptsatz} usually pro-ceed by showing how to eliminate detours gradually, i.e. by repeatedly applying a series of transformations which replace redundant concepts with their definition. For instance, if a configuration like the one below

$$
\AXC{$\stackrel{n}{A}$}
\noLine
\UIC{$\D D_{1}$}
\noLine
\UIC{$B$}
\RL{$\To$I, $(n)$}
\UIC{$A\To B$}
\AXC{$\D D_{2}$}
\noLine
\UIC{$A$}
\RL{$\To$E}
\BIC{$B$}
\DP
$$
occurs in a proof, where a consequence of the logical proposition $A\To B$ is drawn, according to its elimination rule, just after that proposition is introduced, according to its introduction rule, then it can be replaced by
$$
\AXC{$\D D_{2}$}
\noLine
\UIC{$[A]$}
\noLine
\UIC{$\D D_{1}$}
\noLine
\UIC{$B$}
\DP
$$
in which this ``detour'' has disappeared. This transformation eliminates detours only locally, since possibly some new detours are created when a certain number of copies of the derivation $\D D_{2}$ are attached to $\D D_{1}$ and the latter is attached to $\D D_{3}$. Hence, in order to complete the proof of the \emph{Hauptsatz} one must find a way to show that the iteration of this local transformations eventually eliminates all detours, globally. The crucial property in the case just illustrated is that the new detours created by the transformation occur on propositions ($A$ and $B$) which are of \emph{strictly smaller} logical complexity than the proposition $A\To B$. One can then argue by induction over the maximal logical complexity of a proposition occurring in a detour. An argument relying over an induction over an explicit discrete measure assigned to proofs is sometimes called combinatorial, for the fact that it can be formalized in first-order arithmetics.

Two are the ingredients on which a combinatorial argument for the \emph{Hauptsatz} is based: first, the fact that one can apply certain transformations to proofs which, \emph{locally}, make detours disappear (though new detours may appear elsewhere); second, the fact that the new detours created have a logical complexity strictly bounded by the logical complexity of the eliminated detour.

Gentzen's \emph{Hauptsatz} can be given an elegant description in the language of typed $\lambda$-calculi, through the lens of the propositions-as-types correspondence, that is, the acknowledgement of a substantial isomorphism between the structure of propositions and their proofs in logical systems and the structure of types and programs in typed $\lambda$-calculi\footnote{While this correspondence was initially reserved to minimal and intuitionistic logical systems, several extensions to classical systems are now well-known (see for instance \cite{parigot, Krivine09}).
}. The isomorphism between the calculus $\lambda$ and the natural deduction system $NM_{\To}$ for (the $\To$-fragment of) propositional minimal logic is a basic example of this correspondence\footnote{The isomorphism associates types with propositions by an obvious translation which simply replaces all occurrences of $\To $ by $\to$; the correspondence between typed $\lambda$-terms and derivations is obtained by associating variables with hypotheses and the abstraction and application operations, respectively, with the introduction and elimination rules for $\To $. Finally, the reduction of $\lambda$-terms is associated with the reduction of detours in natural deduction derivations.}.

Through the lens of the propositions-as-types correspondence, the redundancies in
proofs correspond to the redexes in the associated programs, i.e. to the configurations from which the execution of the program can be launched. The local transformation depicted above has a simple description in typed $\lambda$-calculus, as it corresponds to the reduction rule of $\lambda_{\to}$:

$$((\lambda x\in A)M^{B})N^{A} \ \to \ M^{B}[N^{A}/x\in A]$$
which eliminates the redex formed by an abstraction immediately followed by an application. The local elimination of redundancies corresponds then to a single step in the execution of a program, and the eliminability of all redundancies corresponds to the termination of the execution of the program. In a word, the \emph{Hauptsatz} expresses the normalization of programs, i.e. the fact that any program is associated with a (unique) normal form\footnote{As a program is said in normal form exactly when it has no redexes.
}.

The history of polymorphism is a very good example of the entanglement of proof-theory and theoretical computer science captured by the propositions-as-types correspondence: in 1971 Girard, a logician, (\cite{Girard72}) introduced System $\B F$ as an extension of the simply typed $\lambda$-calculus corresponding to intuitionistic second order logic, as a means to resolve an important conjecture in the proof-theory of second order logic. In 1974 Reynolds, a computer scientist, independently presented an extension of $\lambda_{\to}$, called polymorphic $\lambda$-calculus (\cite{Reynolds74}), in order to capture Strachey's informal notion of polymorphism by a rigorous type discipline. Actually, the logicians and the computer scientist, from different perspectives, had had the same idea: System $\B F$ and the polymorphic $\lambda$-calculus happen to be exactly the same system. This means that the representation of second order, impredicative, quantification, directly corresponds, via the propositions-as-types correspondence, to polymorphism: from a logical point of view, a polymorphic program, having all types $\sigma[\tau/X]$, with $\tau$ any type, is a program having an impredicative type constructed by second order quantification, i.e. the type $\forall X \sigma$. From a computer science point of view, a proof of an impredicative proposition $\forall XA$ is nothing but a polymorphic program. In a word, the much debated notion of second order proof is captured by the notion of polymorphic program.

In the propositions-as-types view, propositions are considered as types themselves. The proposition $\forall X(X \To X)$ is the same as the type $\forall X(X \to X)$. On the other hand, propositions need not be assigned a type: System $\B F$ does not have a type $prop$ of propositions (which would correspond to the type of a special class of types, that of
propositions). In section \ref{sec::3} we will discuss a theory in which every proposition is a type and is moreover assigned a type, the type of all types.
A precise connection exists between the apparent circularity of the provability conditions for the second order quantifier, discussed in the previous subsection, and the difficulty in showing that its rules satisfy Dummett's requirement of harmony. To in-vestigate harmony in second order logic we have to consider a configuration like the one below
$$
\AXC{$\D D_{1}$}
\noLine
\UIC{$A$}
\RL{$\forall$I}
\UIC{$\forall XA$}
\RL{$\forall $E}
\UIC{$A[B/X]$}
\DP
$$
occurring in a proof, where a consequence of the logical proposition $\forall XA$ is drawn, according to the elimination rule, just after that proposition is introduced, according to the introduction rule; a local transformation to eliminate this detour locally can be applied, yielding the configuration below
$$
\AXC{$\D D_{1}[B/X]$}
\noLine
\UIC{$A[B/X]$}
\DP
$$
where $\D D_{1}[B/X]$ is the derivation obtained by systematically replacing in $\D D_{1}$ all occurrences of $X$ with $B$. The local transformation just depicted has a very simple description in terms of typed $\lambda$-calculus, as it corresponds to the execution rule of System $\B F$:
$$
(\Lambda x.M^{A})\{B\} \ \to \ M[B/X]^{A[B/X]}
$$
which eliminates the redex formed by a type abstraction immediately followed by a type extraction. The local elimination of redundancies is then a single step in the execution of a program.

As in the propositional case, the execution of the local transformation might create new detours; however, a combinatorial bound on the logical complexity of the formula occurring in this detour can not be given in this case: as $B$ might be any formula (in particular $B$ might be the formula $\forall XA$ itself)\footnote{A consequence of this remark is that the subformula principle does not hold for second order logic: for instance, a cut-free proof of a proposition of the form $\forall XA \To B$ might contain propositions of arbitrary logical complexity.
}, the new detours might be arbitrarily complex. In a word, the fact that the same formula $\forall XA$ can occur as a possible substitution for its variable $X$ (i.e. Russell's vicious circle) blocks any combinatorial argument for proving that detours in second order logic can be eliminated.

The \emph{Hauptsatz} for second order logic was conjectured in a paper by Takeuti in 1954 and independently proved by Tait, Prawitz and Takahashi (\cite{Tait1966, Takahashi1967, Prawitz1968}) by means of semantical (i.e. non combinatorial) techniques. However, as it will be discussed in more detail in the next section, the first proof of the \emph{Hauptsatz} for second order logic based on a reduction argument (i.e. showing that the local reduction like the one above eventually eliminate all detours) is a corollary of Girard's 1971 normalization theorem for System $\B F$. Girard's non combinatorial argument exploits a series of predicates of growing logical complexity (hence not globally definable in second-order arithmetics, see section \ref{sec::3}).

His result proves that, in all second order proofs, a redundant concept can be eliminated in favor of its definition.
As a consequence of the second order \emph{Hauptsatz}, harmony does hold for the second order quantifier. However, an epistemological problem in the proof-theoretical justification of second order logic stems from the fact that the argument for the \emph{Hauptsatz} cannot be of a combinatorial nature and must rely on a heavy (and impredicative) logical apparatus. In a word, even if every redundant use of the infinite concept of second order quantification can be eliminated in a finite amount of time, no definitive reduction of the infinite to the finite is achieved, as infinitary concepts are needed to establish this fact. This is compatible with our previous remarks on G\"odel's second theorem: the justification of an infinitary system like second order logic should require infinitary concepts.

Nevertheless, it is often argued that the second order \emph{Hauptsatz} does not produce a viable justification of the use of second order concepts, as it does not provide a reduction of impredicative logic to a predicative ground:

\begin{quote}\small
[...] it should be noted, [...] that these new proofs use essentially the principles formalized in $P^{2}$ [second order Peano Arithmetics], and there is thus still no reduction of e.g. the impredicative character of second order logic. \cite{Prawitz1971}
\end{quote}

In particular, to the eyes of a sceptic with respect to impredicative arguments, an impredicative proof that harmony obtains in second order logic might not provide a satisfying justification of the use of second order quantification.

A different form of circularity, then, with respect to the one discussed in the last sub-section, appears in the proof-theoretic justification of the rules for second order quantification: this is the particular form of circularity appearing in the argument showing that harmony (or eliminability) holds between conditions for introducing and for eliminating the second order quantifier. The fact that all instances $A[B/X]$ can be consequences of $\forall XA$ makes the definition of a combinatorial measure on proofs, which should decrease along with the progressive elimination procedure, impossible. Rather, the arguments showing that such detours are eventually eliminated during the reduction process rely over principles that, in a sense, reflect the impredicativity to be justified. This form of circularity will be discussed in detail in section \ref{sec::3}.

\section{Propositions about all propositions.}\label{sec::2}
 The problem of the circularity \emph{in the definition} of the truth or provability conditions for second order logic is investigated by comparing two arguments: first, Frege's consistency argument in the \emph{Grundgesetze}, purported to show that all propositions in the system have a definite truth-value and based on a definition of what it means for an expression of a given level to have a denotation; second, Girard's normalization proof for System $\B F$ corresponding, through the propositions-as-types correspondence, to a consistency proof for second order logic and based on a definition of what it means for a term of a given type to be computable.

\subsection{Frege's consistency proof.} The language of the \emph{Grundgesetze} (let us call it $\C G$) would be called nowadays a functional language. It was based on two basic distinctions: firstly, the one between functions and objects, the first being unsaturated entities, i.e. constitutively demanding for an argument (a function or an object, depending on the level of the function - see below) to be fed with, the second being, on the contrary, saturated, i.e., let us say, complete, entities. Secondly, the one between names and
their denotations: to every class of entities (objects and functions of some type), which forms a well-defined totality, there corresponds a class of names of which those entities are the denotations. Correspondingly, names for functions, called function names, are unsaturated expressions, (one would say open terms, i.e. terms containing free variables, in modern language), whereas names for objects, called proper names, are saturated expressions (closed terms, i.e. terms without free variables).

Interestingly, the language $\C G$ contains no primitive saturated expressions. Numerical expressions $0,1,2,\dots$, for instance, are famously (or infamously) constructed by Frege starting from unsaturated expressions. A peculiar class of saturated expressions is the class of propositions, which are, in Frege's terminology, names for the most basic objects, truth-values, i.e. True or for the False.

Functions are divided into three classes: \emph{first-level functions}, designated by unsaturated expressions of the form $f(x),g(x),\dots$, whose free variables $x,y,z,\dots$ can be substituted for proper names, yielding proper names; \emph{second-level functions}, designated by unsaturated expressions of the form $\phi(X),\psi(X),\dots$, whose free variables $X,Y,Z,\dots$ can be substituted for first-level function names, yielding proper names; \emph{third-level functions}, designated by unsaturated expressions of the form $\Phi(\eta), \Psi(\eta),\dots$, whose free variables $\eta,\theta,\zeta$ can be substituted for second-level function names, yielding proper names. Examples of first-level functions are given by the logical connectives, e.g. $f(x)=x\to x$, $g(x)=\lnot x$ and by truth-functions, e.g. $x^{2}-1=(x+1)\times(x-1)$; examples of second-level functions are the first-order quantifiers $\forall x X(x), \exists xX(x)$ as well as the function $\spirituslenis{\varepsilon}X(\varepsilon)$, which allows to construct, from any first-level function $f(x)$, a \emph{course-of-value expression}, i.e. a saturated expression $\spirituslenis{\varepsilon}f(\varepsilon)$ designating the class of all objects falling under the concept expressed by the function $f(x)$. Finally, examples of third-level functions are the second-order quantifiers $\forall X\eta(X), \exists X\eta(X)$.

The paragraphs \S29-31 of the \emph{Grundgesetze} contain an argument purported to show that every expression in $\C G$ has a (unique) denotation, i.e. is the name of a well-defined object or function. First, Frege provides conditions for an expression of $\C G$ to have a denotation.

Two cases are considered: if $e$ is a saturated expression, Frege stipulates that $e$ has a denotation if the result $f(e)$ of replacing by $e$ the free variable of any denoting unsaturated expression $f(x)$ (of the appropriate type) is a denoting saturated expression. If $e(\vec x)$ is an unsaturated expression, depending on the variables $\vec x$, Frege stipulates that $e(\vec x)$ has
a denotation if the result $e(\vec b)$ of replacing its free variables with denoting expressions $\vec b$ of the appropriate type always yields a denoting saturated expression. Here one might object that Frege's stipulations are circular, as the denotation of saturated expressions and unsaturated expressions depend on each other. Actually, the German logician denies that his stipulations form a definition of the notion of ``having a denotation''. He states that his stipulations

\begin{quote}\small
[...] are not to be construed as definitions of the words ``to have a reference'' or ``to refer to something'', because their application always assumes that some names have already been recognized as having a reference; they can however serve to widen, step by step, the circle of names so recognized. \cite{Frege1893}
\end{quote}

This objection is discussed for instance by Dummett (\cite{Dummett1991b}, p. 215). We will not enter this delicate aspect of Frege's argument: we will concede that Frege is right here.

The second part of Frege's argument is aimed at showing that any proper name has a denotation, hence in particular that propositions have a definite truth-value. Frege argues by induction on the construction of a proper name, by applying the stipulations at each step. As there is no primitive saturated expression, proper names (as the propositions $5 + 5 = 2 \cdot  5 $ or $\forall x(x + x = 2 \cdot x)$, $\forall X(X \to X)$) must be construed by filling the hole in a first, second or third-level function name ($x + x = 2\cdot x$, $\forall xX(x)$, $\forall X \eta(X)$, respectively) with an expression of the appropriate type (the name $5$, the first-level function $x + x = 2 \cdot x$, the second-level function $X \to X$, respectively). Hence the argument is reduced to showing that primitive unsaturated expressions, of any level, have a denotation.
Remark that, since any proposition is built by opportunely composing function names, the conditions under which a proposition denotes the True are univocally determined once the conditions of what it means to denote for an unsaturated expression are determined. Hence, if we exclude the case of course-of-values expressions, Frege's second stipulation should suffiƒce to reconstruct, from the inductive argument, a unique truth-value for propositions.

For first-level function names $f(x)$ one must show that, if $N$ is a denoting name, then $f(N)$ is too; for instance, if $f(x)$ is the function name $x \to \lnot x$, and $P$ is, by hypothesis, a denoting proposition (corresponding to a definite truth-value), then $P \to \lnot P$ is a denoting proposition (as its truth-value is univocally determined by the truth-value of $P$).

Frege argues in a similar way for second-level expressions, like the function $\forall xX(x)$. On the assumption that $f(x)$ is a denoting first-level function one must show that the proposition $\forall xf(x)$ denotes either the True or the False; if $f(x)$ is denoting, then, for any denoting proper name $N$, the proposition $f(N)$ denotes either the True or the False; now, either $f(N)$ denotes the True for all denoting $N$ or, for some denoting $N$, $f(N)$ denotes the False. In the first case, then $\forall xf(x)$ denotes to the True; in the second case, it denotes to the False; hence, in any case, the proposition $\forall xf(x)$ denotes either the True or the False.

An apparently circular dependency can be found in this argument, though: the verification that the unsaturated expression $\forall xX(x)$ denotes depends on the verification that the saturated expression $\forall xf(x)$ denotes for every choice of a denoting first-level function name $f(x)$, hence including names built using the unsaturated expression $\forall xX(x)$.

This circularity can more clearly be detected in the case of third-level expressions. Frege simply states that in this case one can argue similarly to the second-level case. In order to show that the third-level function $\forall X\eta (X)$, where $\eta$ is a variable which stands for the name of a second-level expression, denotes, one must show, on the assumption that $\Phi(X)$ is a denoting second-level expression, that the proposition $\forall X\Phi (X)$ denotes either the True or the False; now, either $\Phi(f)$ denotes the True for all denoting first-level function name $f$, or for some denoting $f$, $\Phi(f)$ denotes the False. In the first case, then, $\forall X \Phi(X)$ denotes the True; in the second case it denotes the False; hence, in any case, the proposition $\forall X \Phi(X)$ denotes either the True or the False.

As remarked above, this argument should grant that a proposition built by replacing the variable $\eta$ with a denoting second-level expression $\Phi(X)$ in the third-level expression $\forall X \eta(X)$ denotes a definite truth-value. Let $\Phi(X)$ be the second-level function name $\forall X(X(x)\to \lnot X(x))$; this function name is denoting as we have shown that the second-
level function $\forall xX(x)$ is denoting and that, if $f(x)$ is a denoting first-level function,
then $f(x) \to \lnot f(x)$
is a denoting first-level function.  Now the name $ \forall X \Phi(X)$, that
is, $\forall X\forall x (X(x)\to \lnot X(x))$
denotes the True if and only if, for every denoting first-level
expression $f(x)$, the name $\forall x(f(x) \to \lnot f(x))$ denotes the True; hence, in particular, only if the name $\forall x(g(x)\to \lnot g(x))$ denotes the True, where $g(x)$ is the denoting expression $\forall X\Phi(X) \to \lnot (x=2\cdot x)$, which is the case if and only if $\forall X \Phi (X)$ denotes the True.

Similarly to
Carnap's
example
in the previous section, in order to know whether
$\forall X\Phi (X)$ denotes the True, we must verify whether all of its substitution instances $\Phi (f)$, where $f$ is a denoting first-level function name, denote the True. Now, if $f$ is built from $\forall X \Phi(X)$, we are led into a troublesome epistemic position: we must already know whether $\forall X\Phi (X)$ denotes the True in order to assess whether $\forall X\Phi (X)$ denotes the True.

A similar form of circularity appears in the case of course-of-values expressions. Frege
shows that the second-level function $\spirituslenis{\varepsilon}X(\varepsilon)$ has a denotation by arguing that, for any two
denoting first-level function names $f(x), g(x)$, the expression $g(\spirituslenis{\varepsilon}f(\varepsilon))$ has a denotation (exploiting his first stipulation). For that he considers the possible cases for $g(x)$, taking as basis case the one of equality and appealing to the celebrated and unfortunate Basic
Law V (stating that two course-of-value expressions $\spirituslenis{\varepsilon}f(\varepsilon),\spirituslenis{\varepsilon}g(\varepsilon)$ denote the same object
if and only if the proposition $\forall x(f(x) \leftrightarrow g(x))$ denotes the True).

A vicious circle arises when we try to compute the denotation of an arbitrary course-
of-value expression $\spirituslenis{\varepsilon}h(\varepsilon)$: by the first stipulation, we must show that, for any denoting
first-level function name $g(x)$, $g(\spirituslenis{\varepsilon}g(\varepsilon)$
has a denotation.
Let $g(x)$
be the function
$x=\spirituslenis{\varepsilon}h(\varepsilon)$; in order to show that the
proposition $g(\spirituslenis{\varepsilon}h(\varepsilon))$, that is, $\spirituslenis{\varepsilon}g(\varepsilon)=\spirituslenis{\varepsilon}h(\varepsilon)$ has
a denotation, we must show that the proposition $\forall x(g(x)\leftrightarrow h(x))$ has a denotation.
This means that we must show that, for every denoting proper name $N$, $g(N) \leftrightarrow h(N)$
denotes either the True or the False. In particular, then, we are led to show that $g(\spirituslenis{\varepsilon}h(\varepsilon))$
denotes the True, i.e. that $\spirituslenis{\varepsilon}g(\varepsilon)=\spirituslenis{\varepsilon}h(\varepsilon)$ has a denotation, giving rise to a regress.

It is widely accepted that the main problem of Frege's system lies in the presence of course-of-value expressions (which reproduce a sort of na\"ive set-theory inside $\C G$), and which allow to construct the Russell sentence; in particular, Heck (\cite{Heck1998}) proved that the system $ \C G$ without course-of-values is consistent. Nevertheless, from the remarks above it follows that, even without such constructions, Frege's consistency argument would be fallacious: as it is stipulated, the notion of denotation is circularly stipulated (this view is defended by Dummett in his analysis of Frege's argument, \cite{Dummett1991b}). Indeed, the denotation for unsaturated expressions is determined directly in terms of the denotation of a class of expressions constituting possible substitution instances for their free variables, containing just those unsaturated expressions for which the notion of denotation is being stipulated.

\subsection{Girard's consistency proof.} Since Frege's failure, several proofs of consistency for second order logic (without course-of-values expressions) have been provided.

The standard model-theoretic interpretation of second order logic appeals to an assignment of elements of a model to the free variables. More precisely, if $A(\vec X, \vec x)$ is a formula,
where $\vec X$ denotes its predicate variables (each one with a fixed arity) and $\vec x$ its individual variables, the interpretation $A_{\C M}[s]$ of $A$ over a model $\C M$ (of support $M$) depends on a map $s$ associating subsets of $M^{n}$ to $n$-ary predicate variables and elements of $M$ to the individual variables. The model $\C M$ satisfies the proposition $\forall XA$ \emph{relatively to the
assignment $s$}, when, for all $e\subseteq M^{n}$ (where $n$ is the arity of $X$), $\C M$ satisfies $A$ relatively to the assignment $s_{X\mapsto e}$, where $s_{X\mapsto e}$ is the assignment which differs from $s$ only in that it assigns $e$ to the variable $X$.

Contrarily to Frege's stipulation of denotations, the satisfaction property depends on the choice of an assignment of values (denotations, one might say) to variables. Contrarily to Frege's property of denotation, the property ``the model $\C M$ satisfies proposition $A$ relative to an assignment $s$'' is correctly defined by induction over the complexity of formulas: the verification that $\C M$ satisfies $\forall XA$ depends on the verification that $\C M$ satisfies the simpler proposition $A$ in a (possibly uncountable) number of different cases, depending on all assignments $s_{X\mapsto e}$, for each subset $e\subseteq M^{n}$. If we wish to express the property as an infinite conjunction, we can use the infinitary (uncountable, in general) rule below:
$$
\AXC{$\dots$}
\AXC{$\C M\vDash A[s_{X\mapsto e}]$}
\AXC{$\dots$}
\RL{$e\subseteq M^{n}$}
\TrinaryInfC{$M\vDash \forall XA[s]$}
\DP
$$

Following the inductive definition of the satisfaction relation, one can correctly show that all formulas in second order logic have, under a given assignment, a definite denotation, without falling into the regress of Frege's argument. Hence, every closed formula receives a unique truth-value under a given interpretation.

We can compare the definition of denotation given in terms of a model-theoretic satis-faction relation with Frege's stipulations of the property of having a denotations. Frege does not interpret expressions by elements of an arbitrarily chosen model; in some sense, the language $\C G$ comes with an intended interpretation. Variables of an appropriate type are intended to vary over the domain made of the ``totality'' of the objects or functions of that type. Unsaturated expressions designate operations which can be applied on the domains associated with their free variables (for instance, the unsaturated expression $x \to x$ designates a well-defined operation over truth-values).

While the model-theoretic notion of denotation is relative to an assignment of elements of the model to the free variables, Frege's notion of denotation is absolute and rests on the idea of the existence of a totality of the objects or functions of a given level: the denotation of an unsaturated expression depends on the denotation of all expressions obtained by closing the former with saturated expressions. Hence, the denotation of an expression $e( x)$ depends on all substitution instances $e( b)$, where $ b$ varies among names for the objects or functions belonging to the appropriate ``totality'' (a more detailed comparison, as well as the proposal of a different reading of Frege's argument, is provided by Heck \cite{Heck1998}).

Even if circularity no more occurs in the verification of the truth-value of a formula under an interpretation, a different epistemic difficulty arises, if provability conditions are considered rather than truth-conditions: one would hardly accept that, in order to prove an (interpreted) universally quantified statement $\forall XA$ one has to prove all possible instances $A[s_{X\mapsto e}]$,where $e$ varies among the (possibly uncountably many) subsets of the domain of interpretation. In a word, verification is no more circular, but it is wildly infinitary.

A second epistemic objection is that the semantical explanation does not seem in accordance with Dummett's requirement that the consequences of a logical proposition
should be derivable already from the conditions for proving the proposition: among the immediate consequences of $\forall XA$ we must count all the substitution instances $A[B/X]$, where $B$ is an arbitrary proposition; however, such instances are not among the conditions for deriving $\forall XA$, as these are all of the form $A[s_{X\mapsto e}]$, where $e$ is a set. In a word, \emph{when one would like formulas, one actually finds sets} (their possible interpretations).

A consistency argument obtained by model-theoretic means will then hardly satisfy the demands of the logician who sincerely doubts that justifiable provability conditions can be provided for second order logic. Rather, following the arguments sketched in the previous section, he will demand that consistency be derived as a consequence of the \emph{Hauptsatz}, i.e. the result showing that all detours in a proof can be eliminated.

As we mentioned, the \emph{Hauptsatz} for second order logic is a direct consequence of the normalization theorem for System $\B F$. This is actually the reason why this system was originally created. In the brief reconstruction that follows, we will highlight how Girard's normalization argument provides a sophisticated way to escape the problematic circularity in Frege's argument.

Girard's proof is based on the generalization of a technique developed by Tait to prove normalization for simple types. In 1967 Tait (\cite{Tait1966}) had invented a non-combinatorial technique to prove normalization for typed $\lambda$-calculi, and had applied it successfully to $\lambda_{\to}$ and to the more complex type system $\B T$, first introduced by G\"odel, and essentially corresponding to first-order Peano Arithmetics. Tait's idea was to associate, with each type $\sigma$, a special predicate $Comp_{\sigma}$ over $\lambda$-terms, the \emph{computability of $\sigma$}, which intuitively expresses the fact that the term is normalizable (in particular, if $M$ is computable, then it has a normal form). Computability predicates are defined by induction over types. The most interesting case is that of a type of the form $\sigma\to \tau$ : the predicate $Comp_{\sigma\to \tau}$ is said to hold for a term $M$ if, for every term $N$ for which the predicate $Comp_{\sigma}$ holds, the predicate $Comp_{\tau}$ holds for the term $M N$ (i.e. the application of $M$ to $N$). Hence, a term is computable for a type $\sigma\to \tau$ if, whenever applied to a term computable in $\sigma$, it yields a term computable in $\tau$ . As we anticipated, this is an absolute definition, not depending on any assignment.

Normalization is proved (see \cite{Tait1966}) by showing, by induction on a derivation of a typing judgement of the form $\Gamma\vdash M\in \sigma$ (where $\Gamma$ denotes a finite context made of typing assumptions of the form $(X_{i}\in \sigma_{i})$ for the free variables $x_{i}$ occurring in $M$), that for every choice of computable terms $N_{1},\dots, N_{p}$ for the types occurring in $\Gamma$, the term $P=M[N_{1}/x_{1},\dots, N_{p}/x_{p}]$ is computable of type $\sigma$  (i.e. that $Comp_{\sigma} (P)$ holds).

The technique of computability predicates is non combinatorial because it exploits a sequence of predicates of growing logical complexity, making the whole proof not formalizable in first-order Peano Arithmetics $\B{PA}$. This is in accordance with G\"odel's second theorem, since with this technique Tait was able to prove the normalization of system $\B T$, a result implying the consistency of $\B{PA}$.

Girard's original idea was to extend G\"odel's \emph{Dialectica} interpretation of $\B{PA}$ into System $\B T$ to second order Peano Arithmetics $\B{PA}^{2}$. This is the reason why he introduced System $\B F$. In order to prove the normalization of this new system, he generalized Tait's technique to the types of System $\B F$. By G\"odel's second theorem, as the normalization of System $\B F$ implies the consistency of $\B{PA}^2$, such an argument should not be formalizable
in the latter system. Hence, it was expectable that the correct definition of computability for System $\B F$ types be not formalizable in the language of $\B{PA}^2$.

Like Frege's, Tait's definition of computability (\cite{Tait1967}) is absolute: it does not depend on an assignment nor a model, but is given directly in terms of the substitutions instances obtained by replacing computable terms for the free variables occurring in a term. The extension of Tait's technique to impredicative types, in an absolute way, produces circularities similar to those in Frege's stipulations: the computability predicate for an impredicative type $\forall X \sigma$ should depend upon \emph{all} computability predicates $Comp_{\sigma}$ , for all type $\sigma$, hence, in particular, it should depend on itself.

\begin{quote}\small
We would like to say that $t$ of type $\forall X.\tau$ is \emph{reducible}\footnote{Actually, Girard calls ``reducibility'' the property Tait calls ``computability''.
} if for all types $\sigma$, $t\sigma$  is reducible (of type $\tau[\sigma/X]$) [...] but $\sigma$ is arbitrary - it might be $\forall X.\tau$ - and we need to know the meaning of reducibility of type $\sigma$ before we can define it! We shall never get anywhere like this. (GLT 1989)
\end{quote}

In other words, if one defined the computability for the type $\forall X\sigma$ in an absolute way, i.e. by stating that $Comp_{\forall X\sigma}$ holds of $M$ if, for all type $\tau$, $Comp_{\sigma[\tau/X]}$ holds of $M\{\tau\}$, one would reproduce the circularity in Frege's stipulations. Expressed as an infinite conjunction, the predicate $Comp_{\forall X\sigma}$ would be defined by the inference rule below
$$
\AXC{$\dots$}
\AXC{$Comp_{\sigma[\tau/X](M\{\tau\})}$}
\AXC{$\dots$}
\RL{$\tau$ type}
\TrinaryInfC{$Comp_{\forall X\sigma}(M)$}
\DP
$$
which is not well-founded. For instance, with $\sigma = X$, the rule gives rise to an infinite path:
$$
\AXC{$\vdots$}
\noLine
\UIC{$Comp_{\forall XX}(M\{\forall X X\}\{\forall X X\})$}
\RL{$\forall X X$ type}
\UIC{$Comp_{\forall XX}(M\{\forall X X\})$}
\RL{$\forall XX$ type}
\UIC{$Comp_{\forall X X}(M)$}
\DP
$$

As previously discussed, satisfaction in model-theoretic semantics is defined relatively to an assignment of sets to variables. Hence the quantifier $\forall$ is interpreted as ranging not over formulas but over those \emph{sets which can be the interpretation of a formula}. The passage to a relative notion of computability constitutes then one of the keys of the normalization argument for System $\B F$. Introducing a relative notion of computability means to interpret the quantifier $\forall$ not as ranging over types but rather as ranging over those \emph{sets which can be the interpretation of a type}, i.e. over computability predicates. Girard's \emph{tour-de-force} technique was to introduce a general notion of computability predicate of which the predicates of computability for System $\B F$ types are particular instances. For that he introduced \emph{reducibility candidates}, which are sets of normalizable
$\lambda$-terms closed with respect to certain properties related to the execution laws for $\lambda$-terms\footnote{Several definitions of reducibility candidates exist in the literature. For a comprehensive discussion, see \cite{Gallier}.
}.

The computability predicates $Comp_{\sigma}$ are defined relatively to an assignment $s$ of reducibility candidates to the type variables. If $\sigma$ is a variable $X$, then $Comp_{X}$ holds of $M$
\emph{relative to $s$} when $M$ belongs to the candidate $s(X)$. If $\sigma$ is of the form $\tau\to \rho$, then computability is obtained by appropriately modifying Tait's stipulation: the predicate
$Comp_{\sigma}$ holds of $M$ \emph{relative to $s$} if, for all term $N$ for which $Comp_{\sigma}$ holds \emph{relative to $s$}, $Comp_{\tau}$ holds of $MN$ \emph{relative to $s$}.
Finally, in	is a universally quantified type $\forall X\sigma$ , the predicate $Comp_{\forall X\sigma}$	holds of $M$
\emph{relative to $s$} if, for all type $\tau$ and for all reducibility candidate $\C E$ for the type $\tau$, the predicate $Comp_{\sigma}$ holds of $M\{\tau\}$ relative to $s_{X\mapsto \C E}$, where $s_{X\mapsto \C E}$ the assignment which differs from $s$ only in that it associates the candidate $\C E$ to the variable $X$. This stipulation can be expressed by the infinitary rule below:
$$
\AXC{$\dots$}
\AXC{$Comp_{\sigma}[s_{X\mapsto \C E}](M\{\tau\})$}
\AXC{$\dots$}
\RL{$\tau$ type, $\C E\in \C{CR}_{\tau}$}
\TrinaryInfC{$Comp_{\forall X\sigma}[s](M)$}
\DP
$$
where $\C{CR}_{\tau}$ indicates the set of reducibility candidates for the type $\tau$. The definition is well-founded, as the property of relative computability for the type $\forall X\sigma $ is defined only in terms of (possibly uncountably many) cases of relative computability for the strictly simpler type $\sigma$.

Generalizing Tait's argument, Girard finally proved, by induction on the derivation of a typing judgement of the form $M\in \sigma$, that the term $M$ is computable of type $\sigma$ with respect to any assignment $s$ \footnote{Here, again, due to the presence of free variables, the statement is actually more complex: if $M\in \sigma$
then, for every assignment $s$ and every choice of terms $\vec N$ which are computable (in the appropriate
types) relative to $s$, the term $M[\vec N/\vec x]$ is computable for	$\sigma$ relative to $s$.
}. The most relevant case of the inductive argument is that of a derivation ending with the rule corresponding to type extraction:
$$
\AXC{$M\in \forall X\sigma$}
\RL{$Ext$}
\UIC{$M\{\tau\}\in \sigma[\tau/X]$}
\DP
$$

If M is computable for the type $\forall X\sigma$ relative to $s$, then, for all type $\tau$, one should be able to derive that $M\{\tau\}$ is computable of type $\sigma[\tau/X]$ relative to $s$; however, among the conditions for deriving that $M$ is computable for the type $\forall X\sigma$ relative to $s$, there is only the fact that $M\{\tau\}$ is computable of type relative to all $s_{X\mapsto \C E}$. Similarly to the model-theoretic case, \emph{where one would like a type, one finds a set}. One might then suspect that, similarly to the model-theoretic explanation, the computability explanation is not in accordance with Dummett's requirement that the consequences of a logical proposition should be derivable already from the conditions from proving the proposition.

Nevertheless, in the normalization argument this problem is solved by means of a lemma, usually referred to as the substitution lemma (elsewhere as Girard's trick, \cite{Gallier}), which, from a foundational viewpoint, constitutes probably the most interesting part of the proof (and will be discussed in detail in the next section):

\begin{lemma}[substitution lemma]
For any type $\sigma$ the computability of type $\sigma$ relative to the assignment $s_{X\mapsto Comp_{\tau}[s]}$ is equivalent to the computability at type $\sigma[\tau/X]$ relative to $s$.
\end{lemma}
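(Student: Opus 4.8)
The plan is to prove the equivalence by structural induction on the type $\sigma$, after first disposing of a preliminary point that makes the statement meaningful at all: namely that $Comp_{\tau}[s]$ is itself a reducibility candidate for $\tau$, so that $s_{X\mapsto Comp_{\tau}[s]}$ is a legitimate assignment of candidates to type variables. I would state this as a separate lemma, proved by a routine induction on types (checking the closure conditions built into the definition of $\C{CR}$ against the clauses for $\to$ and $\forall$); it is not the difficult part, but it cannot be skipped, since the whole point of the relative notion is that the clause for $\forall$ quantifies over \emph{all} candidates, and this lemma is what guarantees that $Comp_{\tau}[s]$ is among them.

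For the induction itself, write $\C C = Comp_{\tau}[s]$. The base cases are immediate from the definitions: if $\sigma = X$ then $\sigma[\tau/X] = \tau$ and $Comp_{X}[s_{X\mapsto \C C}] = \C C = Comp_{\tau}[s]$; if $\sigma = Y$ with $Y\neq X$ then $\sigma[\tau/X] = Y$ and the two assignments agree on $Y$, so $Comp_{Y}[s_{X\mapsto \C C}] = s(Y) = Comp_{Y}[s]$. The arrow case $\sigma = \rho\to\mu$ unfolds both sides with the Tait-style clause and then applies the induction hypothesis to $\rho$ and to $\mu$ separately: $M$ is computable at $\rho\to\mu$ relative to $s_{X\mapsto \C C}$ iff it sends every $N$ computable at $\rho$ relative to $s_{X\mapsto \C C}$ to an $MN$ computable at $\mu$ relative to $s_{X\mapsto \C C}$; by the induction hypothesis these $N$ are exactly the terms computable at $\rho[\tau/X]$ relative to $s$, and the corresponding $MN$ are the terms computable at $\mu[\tau/X]$ relative to $s$; that is precisely computability at $(\rho\to\mu)[\tau/X]$ relative to $s$.

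The quantifier case $\sigma = \forall Y\rho$ is where the real care is needed, and where I expect the main (though still not deep) obstacle to lie: it is entirely bookkeeping of assignments together with the avoidance of variable capture. By $\alpha$-renaming I may assume $Y\neq X$ and $Y\notin\mathrm{FV}(\tau)$, so that $(\forall Y\rho)[\tau/X] = \forall Y(\rho[\tau/X])$. Unfolding the infinitary clause, $M$ is computable at $\forall Y\rho$ relative to $s_{X\mapsto \C C}$ iff for every type $\theta$ and every $\C E\in\C{CR}_{\theta}$, $M\{\theta\}$ is computable at $\rho$ relative to $(s_{X\mapsto \C C})_{Y\mapsto\C E}$. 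Two observations close the gap: since $X\neq Y$, $(s_{X\mapsto \C C})_{Y\mapsto\C E} = (s_{Y\mapsto\C E})_{X\mapsto \C C}$; and since $Y\notin\mathrm{FV}(\tau)$, $\C C = Comp_{\tau}[s] = Comp_{\tau}[s_{Y\mapsto\C E}]$. So the ambient assignment is $(s_{Y\mapsto\C E})_{X\mapsto Comp_{\tau}[s_{Y\mapsto\C E}]}$, to which the induction hypothesis for $\rho$ applies with $s_{Y\mapsto\C E}$ in the role of the ambient assignment, yielding that $M\{\theta\}$ is computable at $\rho$ relative to this assignment iff it is computable at $\rho[\tau/X]$ relative to $s_{Y\mapsto\C E}$. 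Quantifying over all $\theta$ and $\C E$ again, this is exactly the statement that $M$ is computable at $\forall Y(\rho[\tau/X]) = \sigma[\tau/X]$ relative to $s$.

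Finally I would record why this lemma is precisely what unblocks the $Ext$ case of the normalization proof: from $Comp_{\forall X\sigma}[s](M)$ one is entitled, in the infinitary clause, to instantiate $\theta := \tau$ and $\C E := Comp_{\tau}[s]$ --- legitimate exactly by the preliminary lemma --- obtaining $Comp_{\sigma}[s_{X\mapsto Comp_{\tau}[s]}](M\{\tau\})$, which by the substitution lemma is $Comp_{\sigma[\tau/X]}[s](M\{\tau\})$, as demanded. The essential structural feature, worth stressing, is that throughout the induction only the clause for the strictly smaller subformula $\rho$ (or $\mu$) is ever invoked, so the recursion stays well-founded and the impredicative circularity of the naive absolute definition is never re-entered; the price of this, as discussed above, is that the family of predicates $(Comp_{\sigma})_{\sigma}$ has unbounded logical complexity.
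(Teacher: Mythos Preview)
Your proof is correct and follows the same two-part decomposition the paper itself sketches in subsection~3.2: first verify that $Comp_{\tau}[s]$ is a reducibility candidate, then prove the equivalence by structural induction on $\sigma$. Your inductive argument for the second part, including the bookkeeping in the $\forall$ case, is exactly what the paper means when it says this part ``can be carried over by induction over types in a rather straightforward way''.

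Where you and the paper part company is in the weighting of the preliminary lemma. You call it a ``routine induction on types'' and ``not the difficult part''. The paper says the opposite: this first part ``cannot be proved by induction, as it rests on the use of an instance of the comprehension schema'', and is singled out as the locus of the whole argument's impredicativity. The point is that to form $Comp_{\forall Y\rho}[s]$ as a \emph{set} one must take an intersection indexed by all candidates, a class that already contains $Comp_{\forall Y\rho}[s]$; the inductive hypothesis on the strictly smaller $\rho$ does not by itself deliver this set-existence, and the closure of $\C{CR}$ under arbitrary intersections that you invoke is precisely where comprehension enters. Mathematically your induction is standard and sound; but by dismissing this step as routine you glide past exactly what Section~3 calls the ``reflecting circularity'' of the argument --- that the extraction rule is justified, in $\B{PA}^{2}$, by an occurrence of $\forall$E applied to $\TT{Comp}_{\tau}$. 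So the decomposition and the mathematics coincide with the paper's; the assessment of which half carries the foundational weight is inverted.
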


If $M$ is computable for the type $\forall X\sigma$ relative to $s$, then for all type $\tau$, $M\{\tau\}$ is computable for $\sigma$ relative to all $s_{X\mapsto \C E}$ (where $\C E$ is any reducibility candidate for the type
$\tau$). Hence in particular $M\{\tau\}$ is computable of type relative to $s_{X\mapsto Comp_{\tau}[s]}$. Now, by the substitution lemma, this implies that $M\{\tau\}$ is computable of type $\sigma[\tau/X]$ relative to $s$. In other words, we can argue, without falling into regress, that, if $M$ is computable for $\forall X\sigma$ relative to $s$ then, for all type $\tau$, $M\{\tau\}$ is computable of type $\sigma[\tau/X]$ relative to $s$. Hence the fact that, if $M$ is computable of type $\forall X\sigma$ relative to $s$, then $M$ is computable
of type $\sigma[\tau/X]$ relative to $s$, holds by proof (a proof appealing to a quite delicate lemma, as we discuss in section \ref{sec::3}) and not hold by the very definition of computability.

The appeal to a relative notion of computability (obtained thanks to the notion of reducibility candidates) and to the substitution lemma allows to overcome the apparent circularity of the provability conditions for impredicative propositions: a ``computable'' proof for $\forall XA$ yields ``computable'' proofs for all possible substitution instances $A[B/X]$, included those from which one would expect a circularity to appear (i.e. $B = \forall XA$).

A first glance at Girard's argument might give the impression that the substitution lemma makes circularity disappear as by a magician's trick. In the next section, starting with Martin-L\"of's unfortunate attempt to generalize this argument, we will try to uncover the gimmick, and to reconstruct the sophisticated form of circularity that the argument exploits.

\subsection{Computability and uniform proofs.} Though with a quite heavy conceptual baggage, Girard's argument shows that there exists a procedure which eliminates, in a finite amount of time, all detours from a second order proof. This argument, as we saw, seems to vindicate the idea that the consequences of a universally quantified proposition are already implicit in the conditions for proving such a propositions. However, as in the case of the model-theoretic explanation, it seems implausible to identify such con-ditions with a (possibly non countable) family of sub-derivations, as the definition of computability seems to suggest.

Nevertheless, the computability argument does shed some light over the problem of identifying the provability conditions for impredicative propositions. If one considers the infinitely many conditions for asserting that a $\lambda$-term $M$ is computable of type $\forall X\sigma$, one realizes that there is just one thing which remains constant in all those conditions: this is the term $M$ itself. Indeed, for any type $\tau$ and for any reducibility candidate $\C E$ for $\tau$, it is the same $M$ for which computability of type $\sigma$ relative to $s_{X\mapsto \C E}$ holds\footnote{One might object that it is not $M$ but rather $M\{\tau\}$ which is computable at type $\sigma$ relative to $s_{X\mapsto \C E}$ and hence that the term whose computability is predicated is not constant in all premises. However, one can rewrite the normalization proof with the version \emph{\`a la Curry} of System $\B F$ (see footnote \ref{footnote::8}), by exploiting an untyped variant of reducibility candidates (see for instance \cite{Gallier}), where these are sets of untyped $\lambda$-terms and do not depend on types. In this setting the computability condition can be written in the form
$$
\AXC{$\dots$}
\AXC{$Comp_{\sigma}[s_{X\mapsto \C E}](M)$}
\AXC{$\dots$}
\RL{$\C E\in \C{CR}$}
\TrinaryInfC{$Comp_{\forall X\sigma}[s](M)$}
\DP
$$
where it is clear that it is the same $M$ which is uniformly computable at type $\sigma$ for all assignment $s_{X\mapsto \C E}$.
}. This looks very similar to what happens with the program $\TT{Map}$ described in the previous section: the program is not intended to provide, for any given type, a specific behavior; rather, its behavior is described so as to be the same for every type. In a sense, the fact that a program is computable at type $\forall X\sigma$ means that that very program will be computable at type $\sigma$, for any choice of a type $\tau$ and of an appropriate candidate, and hence, eventually (by the substitution lemma), computable at type $\sigma[\tau/X]$.

With Girard's own words,

\begin{quote}\small
In other words, everything works as if the rule of universal abstraction (which forms functions defined for every type) were so \emph{uniform} that it operates without any information at all about its arguments. \cite{Girard1989}
\end{quote}

The uniformity of computability predicates constitutes, historically, one of the first mathematical formalizations of the idea of parametric polymorphism introduced by Strachey in \cite{Strachey1967} (see above). The uniformity of second order quantification is highlighted by a quite surprising result, sketched in Girard's thesis: as a consequence of the normalization theorem, no program $M$ in System $\B F$ can be non-uniformly polymorphic in the sense that, for two distinct types $\sigma,\tau$ , the terms $M\{\sigma\}$ and $M\{\tau\}$ reduce to distinct normal forms. This means, intuitively, that polymorphic programs in System $\B F$ cannot behave differently depending on the type which is assigned to them, coherently with the idea that a polymorphic program is insensible to the type assigned. A very simple example of a non-uniform polymorphic program would be, for instance, a program $\TT P$ of type $\forall X\forall Y(X\to Y\to \TT{Bool})$\footnote{Where $\TT{Bool}$ is the type $\forall Z(Z\to Z\to Z)$ containing the codes $\TT T = \Lambda Z.(\lambda x\in Z)(\lambda y\in Z)x$ and $\TT F=\Lambda Z.(\lambda x\in Z)(\lambda y\in Z)y$ for the truth-values.
} deciding whether two closed types are equal, i.e. satisfying the instructions below:
$$
\TT P\{\sigma\}\{\tau\} =
\begin{cases}
\TT T & \text{ if }\sigma,\tau \text{ are closed and }\sigma=\tau \\
\TT F & \text{ otherwise}
\end{cases}
$$

We recall here a variant of Girard's original argument (contained in \cite{Mitchell1999}): we suppose the existence of a non-uniformly polymorphic term, and we use it to construct a counterexample to the normalization theorem. Let us suppose that System $\B F$ contains a term $\TT J$ of type $\forall X\forall Y((X\to X)\to (Y\to Y))$ whose execution depends upon the types instantiated for $X$ and $Y$ in the following sense: if the same closed type $\sigma$ is instantiated for $X$ and $Y$, and if $M$ is a term of type $\sigma\to \sigma$, then $\TT J$, applied to $M$, gives $M$ back; on the contrary, if two distinct closed types and are instantiated for $X$ and $Y$, respectively, and if $M$ is a term of type $\sigma\to \sigma$, then $\TT J$ applied to $M$ erases $M$ and outputs the term $\TT{ID}_{\tau}$. The program $\TT J$ satisfies then the instructions below:
$$
\TT J\{\sigma\}\{\tau\} M =
\begin{cases}
M & \text{ if }\sigma,\tau \text{ are closed and }\sigma=\tau \\
\TT{ID}_{\tau} & \text{ otherwise}
\end{cases}
$$

The polymorphism of the term $\TT J$ is then non-uniform, since its output depends, similarly to $\TT P$, upon a former verification of the equality between the types which are assigned to it.

We show now how, on the basis of our assumption, a term of type $\rho= \forall Z(Z\to Z)$ having no normal form can be constructed, contradicting the normalization theorem: let $\Delta$ be the term $(\lambda x\in \rho)(x\{\rho\})x$, of type $\rho\to \rho$. Then the term $\TT K := \Lambda z.(\TT J\{\rho\}\{Z\})\Delta$ has type $\forall Z(Z\to Z)$. In definitive the term $(\TT K\{\rho\})\TT K$ has type $\rho$ and reduces in a finite number of steps to itself:
$$
(\TT K\{\rho\})\TT K \ \to \ (\TT J\{\rho\}\{\rho\}\Delta)\TT K \ \to \ \Delta \TT K \ \to \ (\TT K\{\rho\})\TT K
$$
hence it cannot be normalizable.

From the logical viewpoint, the argument can be summarized as follows: as soon as one admits that, in order to devise a proof of a universally quantified proposition $\forall XA$, one can choose a different argument for different possible substitution instances for $X$, then one can actually exploit the circularity of second order types to construct a proof whose redundancies cannot be eliminated. Hence, the \emph{Hauptsatz} of second order logic
implies, in addition to consistency, that the proofs of an impredicative proposition cannot be non-uniform.

These considerations have a quite strong consequence regarding the first form of cir-cularity discussed in the first section, i.e. the idea that the provability conditions are described by an infinite conjunction. We are forced to consider ``na\"ive'' the view accord-ing to which a proof of a universally quantified statement is a family of arbitrary proofs of all of its substitution instances: the \emph{Hauptsatz} forces us to accept that this family of proofs must satisfy some uniformity requirement.

For instance, one might add to the explanation of the second order quantifier provided by the infinitary rule below
$$
\AXC{$\dots$}
\AXC{$\D D_{A}$}
\noLine
\UIC{$A[B/X]$}
\AXC{$\dots$}
\RL{$B$ prop}
\TrinaryInfC{$\forall XA$}
\DP
$$
the demand that the family of derivations $\D D_{B}$ satisfies a uniformity condition expressing the fact that the derivations $\D D_{B}$ are ``the same'', independently of the proposition $B$. This can be expressed by the condition that the equation below
$$
\D D_{X}[B/X] \ \equiv \ \D D_{B}
$$
holds for all proposition $B$, where the symbol denotes syntactic equality between derivations. This uniformity condition states that the family of derivations $(\D D_{B})_{B \ prop}$ is uniform in $B$. The uniformity condition offers then a way-out of the circularity problem: the derivation $\D D_{B}$, for $B$ of arbitrary logical complexity, can be constructed starting from the ``finite'' derivation $\D D_{X}$ , whose conclusion has a logical complexity strictly bounded by that of $\forall XA$, simply by replacing, in the latter, all occurrences of $X$ by $B$.

Hence, even if $\forall XA$ can still be instantiated as a value of its variable $X$ (i.e. even if the $\B{VCP}$ is violated), no vicious circle occurs \emph{at the level of provability conditions}.

This simple syntactical condition should be compared with the mutilation condition and the naturality condition described in appendix \ref{app::A}, which are technical conditions used in the categorial interpretation of polymorphism. All these uniformity conditions share the fact that the potentially circular cases must be computable starting from the non circular (or ``finite'') ones, i.e. those which respect the complexity constraint.

At first glance, it seems quite tempting to say that the uniformity of polymorphism provides a mathematical vindication of Carnap's argument, stating that a proof of a universally quantified proposition $\forall XA$ does not amount to a proof of all of its possible substitution instances, but rather to a proof of $A[X]$ in which the variable $X$ occurs as a parameter. In \cite{Longo1997} it is argued, for instance, that Carnap's argument is vindicated by a particular way of formalizing parametric polymorphism, i.e. by means of the technical notion of \emph{genericity} (\cite{Longo1993}), stating, roughly, that polymorphic programs which are indistinguishable on one of their types, are indistinguishable on all of their types. A reference is also made to a remark by Herbrand in \cite{Goldfarb1987}, stating that proofs of universal statements (in general, not only second order) can be seen as ``prototype proofs'', which can be used schematically to produce proofs of all substitution instances.

\begin{quote}\small
Indeed, we want to argue that these impredicative constructions are \emph{safe}, along the lines of Carnap's argument, by showing that System $\B F$ contains a precise notion of prototype proof, in the sense of Herbrand, and of generic types, with strong ``coherence'' properties. \cite{Longo1997}
\end{quote}

Many different mathematical characterizations of parametric, or uniform, polymorphism can be found in the literature, and actually this constitutes a still lively field of research in theoretical computer science. Reynolds' \emph{relational parametricity} (\cite{Reynolds1983}) has probably been the most investigated theory of polymorphism in the last thirty years (see \cite{Hermida2014} for a reconstruction); other important examples are Bainbridge's \emph{functorial polymorphism} \cite{Bainbridge1990, Girard1992} and Girard's theory of \emph{stable functors} \cite{Girard1986}. It must be admitted that, with very few exceptions (like \cite{Longo1997}), the huge literature on polymorphism of the last forty years has gone practically unnoticed in the philosophical debate on impredicativity. It is out of the scope of this paper to make a complete survey of all of these advances. The reader interested in the mathematical counterpart of the problem of impredicativity will find in appendix \ref{app::A} a brief reconstruction of some key aspects in the theory of stable functors and, more generally, in the functorial approach to polymorphism. These approaches provide a good example of the technical challenges constituted by the mathematical interpretation of impredicative type disciplines and of the non trivial solutions offered.

\section{The type of all types.}\label{sec::3} The problem of circularity \emph{in the elimination} of redundant impredicative concepts in a proof is investigated by comparing two arguments: an argument for the normalization of Martin-L\"of's impredicative type theory (\cite{ML70}) and Girard's normalization proof, already discussed in the previous section. A particular form of circularity, that we call \emph{reflecting circularity}, will be distinguished from the vicious circularity seen in Frege's argument. A conjecture will be made to distinguish between the harmful reflecting circularity in the first argument and the apparently harmless reflecting circularity in the second.

\subsection{Martin-L\"of's proof.} The fundamental idea of Martin-L\"of's original 1971 intuitionistic type theory (\cite{ML70}), $\B{ITT}_V$ for short, arises naturally from the conjunction of the following three demands: first, Russell's principle $\B{T1}$ that the range of application of a function must form a type; second, the possibility of quantifying over propositions, that is, the demand that the system contains System $\B F$ polymorphism; third, a strict interpretation of the propositions-as-types correspondence, that is, the identification of every proposition with a type and, viceversa, of every type with a proposition.

As we already remarked in the first section, the conjunction of Russell's principle $\B{T1}$ with the demand that quantification over propositions be possible implies the existence of a type of all propositions. Now, the complete identification of propositions and types makes the type of all propositions the type of all types: Martin-L\"of's type theory is a dependent type theory with a type $V$ of all types, i.e. endowed with the axiom
$$V\in V$$

This axiom is clearly incompatible with the $\B{VCP}$ and with Russell's ban on self-application:
$€V$ can appear as a value of any variable which has type $V$.

The theory $\B{ITT}_V$ is extremely simple to formulate, as it essentially contains two constructions: a dependent product $(\Pi x\in \sigma)\tau$ and the type $V\in V$.

In order to appreciate how these two simple constructions allow to build an extremely powerful impredicative type theory, much more expressive than System $\B F$, let us first recall the notion of dependent type.

Dependent types were introduced by De Bruijn (\cite{bruijn70}); the basic idea is that, given a type $\sigma$
and a type $ \tau(x)$, depending on a variable $x\in \sigma$, one can construct the dependent product $(\Pi x\in \sigma)\tau(x)$, which is the type of all functions associating, with each $x\in \sigma$, an object of type $\tau(x)$. The terms of type a dependent product are constructed, similarly to the case of implication types, by an abstraction and an application operation: if, on the assumption that $x\in \sigma$, the term $M$ has type $\tau(x)$, then one can form the abstracted term $(\lambda x\in \sigma)M$ of type $(\Pi x\in \sigma)\tau(x)$. Conversely, given terms $M$ and $N$ of type, respectively, $(\Pi x\in \sigma)\tau(x)$ and $\sigma$, one can form the application term $MN$ which has type $\tau(N)$.

All types of System $\B F$ can now be obtained in $\B{ITT}_V$ as special cases of the product type. In particular, the type $\sigma\to \tau$ corresponds to the product $(\Pi x\in \sigma)\tau$ where $\tau$ does not depend on $x$, and the type $\forall X\sigma$ corresponds to the product $(\Pi x\in V)\sigma$ over all types. For instance, the type $\forall X(X \to X)$ of System $\B F$ can be written as the product $(\Pi x\in V)(\Pi y\in x)y$ or, equivalently, as $(\Pi x\in V)(x\to x)$.

In $\B{ITT}_V$ any type $\sigma$ occurring in righthand position in a judgment, say $M\in \sigma$ , can also appears as a term, i.e. in lefthand position, in the judgement $\sigma\in V$. In $\B{ITT}_V$ there is no meta-theoretical distinction between types and terms: the fact that a term $a$ denotes a type is expressed, \emph{in the theory}, by the judgement $a\in V$. Indeed, the system $\B{ITT}_V$ combines and unifies the idea that every proposition must be assigned a type (the type prop of propositions, $V$ in this case) and the idea that every proposition is identified with a type: for instance, the impredicative proposition $\forall X(X \to X)$ is well-typed (of type $V$) and is a type (as any object of type $V$).

A strongly related system is Girard's System $\B U^{-}$ (\cite{Girard72}). This is an extension of System $\B F$ in which the type discipline of System $\B F$ is essentially used to type proposition. In System $\B U^{-}$, types are not identified with propositions; rather propositions constitute a special class of types, those of type $prop$. However, this theory is extremely impredicative, due to the existence of impredicative universes (corresponding to impredicative types) whose elements can be seen as polymorphic proposition constructors. System $U^{-}$, which can be embedded in $\B{ITT}_V$ , was proved inconsistent by Girard's paradox (see below).

The theory $\B{ITT}_V$ is much stronger than System $\B F$: in addition to quantification over types, one can explicitly define functions of type $V \to V$, i.e. functions over types (for instance the power-set function $\B P = (\lambda x\in V)(x\to V)$), functions over functions over types (i.e. of type $(V\to V)\to V$ ), and so on; one can further extend this hierarchy by considering functions of an impredicative type. For instance the type $(\Pi x\in V)(x\to V)$ can be taken as the type of all functions from objects of an arbitrary type to types.

In spite of the overwhelming impredicativity of $\B{ITT}_V$ , Martin-L\"of was able to devise a notion of computability for this system, by generalizing Girard's reducibility candidates technique in a very elegant way. With this notion, in a paper which remained unpublished (\cite{ML70}), he provided a normalization argument for $\B{ITT}_V$ , purported to show the consistency of the theory. However, $\B{ITT}_V$ is not consistent: a not normalizing term of type $\bot= (\Pi x\in V)x$ was found by Girard in 1971, by exploiting a variant of Burali-Forti paradox. Girard's paradox was actually constructed for System $\B U^{-}$. Girard's paradox is much more complex than Russell's paradox (for a reconstruction, see \cite{Coquand1986, Hurkens95}). Indeed,
the circularity at work in $\B{ITT}_V$ , as we will try to show, is much harder to identify than that of Frege's system.

In what follows we sketch a definition of reducibility candidates for $\B{ITT}_V$ , inspired from Martin-L\"of's argument as well as from Girard's work on System $\B U^{-}$. The aim of this presentation is not to reconstruct in full detail the normalization argument, but rather to show that a definition of reducibility candidates for this extremely impredicative theory can be given without falling in the tough circularity of Frege's stipulations.

By comparing this argument with the normalization proof of System $\B F$, we will try to understand what actually goes wrong in the case of $\B{ITT}_V$ . In particular, it will be argued that a precise answer might depend on the solution of a conjecture concerning the expressive power of $\B{ITT}_V$ , which will be discussed in the next subsections.

Let $\Lambda$ be the set of all terms of $\B{ITT}_V$ . When no ambiguity occurs, we will indicate by small letters $a,b,c,\dots$ the terms of occurring at the lefthand side of a judgement (i.e. in term position) and by capital letters $A,B,C,\dots$ the terms of occurring at the righthand side (i.e. in type position).

A reducibility candidate will be a pair $(s,C)$ where $s$ is a set and $C\in \wp(\Lambda \times s)$ is a relation over terms and elements of $s$. Intuitively, one can think of this relation as a ``candidate'' for a relation of computability expressing the property ``$\xi$ is a computation of $a$''. Indeed, we'll demand that, from the fact that $(a,\xi)$ is in $C$, it follows that $a$ is normalizable.

An interpretation of the terms of	 $\Lambda$ obtained as follows:

\begin{itemize}

\item to every type $A(x_{1},\dots, x_{n})$ (i.e. every term of type $V$), depending on variables $x_{1}\in A_{1}, x_{2}\in A_{2}(x_{1}), \dots, x_{n}\in A_{n}(x_{1},\dots, x_{n-1})$, a pair $\alpha_{A}(\OV \xi)=\big ( s_{A}(\OV \xi), Comp_{A}(\OV \xi)\big)$, depending on an assignment $\OV \xi= \xi_{1}\in s_{A_{1}}, \xi_{2}\in s_{A_{2}}(\xi_{1}), \dots, \xi_{n}\in s_{A_{n}}(\xi_{1},\dots, \xi_{n-1})$, is associated, where $s_{A}$ is a class and $Comp_{A}(\OV \xi)(a,\eta)$ is the \emph{predicate} of computability of type $A$, which is a predicate over terms of type $A$ and objects of $s_{A}$ expressing the property ``$\eta$ is a computation of $a$, under the assignment $\OV \xi$'';

\item to every term $a(x_{1},\dots, x_{n})$ of type $A(x_{1},\dots, x_{n})$ depending on variables $x_{1}\in A_{1},x_{2}\in A_{2}(x_{1}),\dots, x_{n}\in A_{n}(x_{1},\dots, x_{n-1})$, an object $\alpha_{a}(\OV \xi)$ belonging to the class $s_{A}(\OV \xi)$, depending as above on an assignment $\OV \xi$, is associated ($\alpha_{a}(\OV \xi)$ represents, intuitively, the computation of $a$).

\end{itemize}

In particular, if $A$ is the type $V$ , then $s_{A}$ is the class of all reducibility candidates and $Comp_{V}(A,(s,C))$ expresses the property ``$A$ is normal and $(s,C)$ is a pair such that, for all term $a\in A$ and all $\xi \in s$, $C(a,\xi)$ implies that $a$ is normalizable''. The impredicativity of $V$ as the type of all types is thus reflected by its associated computability predicate, which expresses the property of being a reducibility candidate.

As the product type $A = (\Pi x\in B)C(x)$ generalizes and unifies implication types (when
$C$ does not depend on $x$) and polymorphic types (in the case in which $A = V $), the definition of $\alpha_{A}(\OV \xi)$ in this case generalizes and unifies Tait's and Girard's definitions of computability for implication and polymorphic types: $\alpha_{A}(\OV \xi)$ is the pair $\big (s_{A}(\OV \xi), Comp_{A}(\OV \xi)\big )$ where
\begin{itemize}
\item $s_{A}(\OV \xi)$ is the type of all functions $\eta$ associating, with any $\xi'\in s_{B}(\OV \xi)$, an element $\eta(\xi')\in s_{C}(\OV\xi\xi')$;

\item $Comp_{A}(\OV \xi)(a,\xi')$ holds when, for every term $b$ and every $\eta\in s_{B}$, if $Comp_{B}(\OV \xi)(b,\eta)$ holds, then $Comp_{C}(\OV \xi \xi')(ab, \xi'(\eta))$ holds.
\end{itemize}

Let us see in what sense the definition generalizes Tait's and Girard's ones:
\begin{itemize}

\item[1.] 
if the type $C(x)$ does not depend on $x$, i.e. if $A = B \to C$, then $\xi'$ is a computation of $a$ of type $B\to C$ (under an assignment $\OV \xi$) when for every term $b$ and $\eta\in s_{B}(\OV \xi)$ such that $\eta$ is a computation of $b$ (under $\OV \xi$) of type $B$, then the application $\xi'(\eta)$ of $\xi'$ to $\eta$ is a computation of $ab$ of type $C$ (under $\OV \xi$);

\item[2.] if $B$ is the type $V$, i.e.  if we can write $A$ as $\forall XC[X]$, then $\xi'$ is a computation
of $a$ of type $\forall XC[X]$ (under an assignment $\OV \xi$) when, for every type $B$ and pair $(s, C)$, if $(s,C)$ is a reducibility candidate for the type $B$ (i.e. if $Comp_{V}(B, (s,C))$ holds), then $\xi'(s,C)$ is a computation of $aB$ (under $\OV \xi(s, C)$). In other words, $\xi'$ is a computation of $a$ (under an assignment $\OV \xi$) if, for every type $B$ and reducibility candidate $(s, C)$ for that type, the computation $\xi'(s, C)$ is a computation for $aB$ (under the assignment $\OV \xi(s, C)$ in which the variable $X$ is associated to $(s, C)$).
\end{itemize}

In order to show that every term is computable, one can now argue that, for every term $a(x_{1},\dots, x_{n})$ of type $A(x_{1},\dots, x_{n})$ and assignment $\OV \xi$, $\alpha_{a}(\OV \xi)$ is a computation of $a$
of type $A$ (under $\OV \xi$), i.e. $Comp_{A}(\OV \xi)\big (a, \alpha_{a}(\OV \xi)\big )$ holds. In particular, since $A$ has type $V$,
this implies that $Comp_{V}\big (A,(s_{A},Comp_{A})\big )$ holds, which means that, if $Comp_{A}(\OV \xi)(a,\xi')$ holds for some $\xi'$, then $a$ is normalizable.
From $Comp_{A}(\OV \xi)(a, \alpha_{a}(\OV \xi))$ it follows then that $a$ 
is normalizable. The argument can be given by induction
on the
derivation of a judgement $a(x_{1},\dots, x_{n})\in A(x_{1},\dots, x_{n})$ under the hypotheses $x_{1}\in A_{1},\dots, x_{n}\in A_{n}(x_{1},\dots, x_{n-1})$. Some steps of this argument will be made explicit in the next subsections.

The problem in the argument just sketched cannot reside, like for Frege, in the induction hypotheses in the definition of computability: the predicates of computability are defined by a well-founded induction over types. In the case of $V$ the predicate $Comp_{V}(a,\xi)$, expressing the property of being a reducibility candidate, is defined without reference to other computability predicates. In the case of an impredicative type $\forall XB =  (\Pi x\in V)B$, the computability predicate $Comp_{\forall XB}$ is defined in terms of a (countably) infinite class of computability predicates $Comp_{B}$ of types of strictly smaller complexity. The predicate $Comp_{\forall XB}$ can be expressed as an infinite conjunction by the
rule below:

$$
\AXC{$\dots$}
\AXC{$Comp_{B}(\OV \xi(s,C))(cA, \eta((s,C)))$}
\AXC{$\dots$}
\RL{for any $A\in V$, $(s,C)\in s_{V}$ s.t. $Comp_{V}(A,(s,C))$}
\TrinaryInfC{$Comp_{(\Pi x\in V)B}(\OV \xi)(c,\eta)$}
\DP
$$

which is well-founded, because in all premisses there appear computability predicates on strictly simpler types and, as we have just seen, the verifications of computability for V is always well-founded.

Nevertheless, by inspecting the stipulations defining the notion of computability, a set-theory closely imitating the type structure of $\B{ITT}_V$ is presupposed: on the one had, the predicate $Comp_{V}$ is defined over the class of all reducibility candidates; on the other hand, the axiom $V\in V$ requires that $Comp_{V}\big (V, (CR, Comp_{V})\big )$ holds, where $CR$ denotes the class of all reducibility candidates: hence $(CR, Comp_{V})$ is a reducibility candidate, which implies that the class $CR$ must be a \emph{set}, a set containing, in particular, the pair
$(CR, Comp_{V})$. Clearly, neither $\B{ZF}$ nor its most well-known consistent extensions contain such a set.

Hence, on the one hand the stipulations provided for computability do not involve vicious circles: while Frege's notion of having a denotation would not be well-defined in any universe, computability for the type of all types would be well-defined in a universe containing a set of all sets. On the other hand, reasonably, if a set-theory can be devised in which computability can be formalized, then in this theory a false proposition express-ing the fact that $\B{ITT}_V$ is consistent should be derivable. Since such a theory should plausibly be able to derive the true proposition that $\B{ITT}_V$ is not consistent (through a formalization of Girard's paradox), the set-theory in question should be inconsistent.

\subsection{Reflecting circularity in the normalization of System $\B F$.} In the last sub-section we sketched an argument for the normalization of $\B{ITT}_V$ and we argued that the circularity in it is not the vicious circularity consisting in the definition of a notion which presupposes that very notion to be already defined, but rather in the circularity arising from the fact that the whole argument must be formalized in a (meta-)theory which reflects the (impredicative) structure of the theory.

This form of circularity, that we will call \emph{reflecting circularity}, can be described in more clear terms if we go back to Girard's normalization proof and to its most surprising part, the substitution lemma. This lemma says that the computability of type $\sigma$ relative to an assignment $s_{X\mapsto Comp_{\tau}[s]}$ which assigns to $X$ the computability of type $\tau$ is the same as the computability of type $\sigma[\tau/X]$ relative to $s$. This is the part of the proof which essentially validates the extraction rule, i.e. the elimination rule of the universal quantifier, as it implies that a term computable of type $\forall X\sigma$ relative to $s$, hence computable of type $\sigma$ relative to all assignments  $s_{X\mapsto \C E}$, is in particular computable of type $\sigma[\tau/X]$ relative to $s$.

The proof of the lemma can be decomposed in two parts: first one has to show that the predicate of computability of type $\tau$ (relative to $s$) defines a reducibility candidate (hence, that it defines a set); then one proves that the computability of type relative to $s_{X\mapsto Comp_{\tau}[s]}$ is the same that the computability of type $\sigma[\tau/X]$ relative to $s$.

While the second part can be carried over by induction over types in a rather straight-forward way, the first part cannot be proved by induction, as it rests on the use of an instance of the comprehension schema. Indeed, the fact that computability predicates define sets cannot be proved by induction, precisely because of the definition of computability for universally quantified types: stating that a term $M$ is computable of type $\forall X\sigma$ relative to $s$ when, for every type $\tau $ and candidate $\C E$ for $\tau$, $M$ is computable at type $\sigma$
relative to $s_{X\mapsto \C E}$, corresponds to defining a predicate by an intersection over a family of computability predicates indexed by all candidates for $\tau$ (where $\tau$ can be of arbitrary logical complexity). Hence one cannot prove that computability at type $\forall X\sigma$ is a candidate (hence a set) starting from the ``induction hypothesis'' that all computability predicates
of type $\sigma$ relative to $s_{X\mapsto \C E}$ are candidates: by taking $\tau$ to be $\forall X\sigma$ the candidate we are defining appears among the candidates of the induction hypothesis! In a word, we would get back to the regress of Frege's argument.

However, that the computability of type $\forall X\sigma$ is a candidate (a candidate depending on \emph{all} candidates, including himself), can be stated directly by an instance of the com-prehension schema of set theory\footnote{More precisely, by an instance of the restricted comprehension schema, as $Comp_{\forall X\sigma}$ is a subset of the set of all terms of type $\forall X \sigma$.
}: the $\B{VCP}$ is clearly violated, but infinite intersections of candidates indexed by all candidates can be shown to exist in $\B{ZF}$, as candidates are subsets of the power-set of the set of all $\lambda$-terms (see \cite{Gallier}).

The passage which justifies the impredicative elimination rule of System $\B F$ relies thus on an impredicative set-theoretical construction at the level of reducibility candidates. In some sense, the set-theory in which reducibility candidates are formalized closely imitates the structure of System $\B F$ types. Hence, also the normalization argument of System $\B F$ manifests some form of reflecting circularity.

We can make this remark more precise by exploiting the fact that the theory of reducibility candidates can be expressed directly in the language of second order Peano Arithmetics $\B{PA}^2$, without relying on set-theory, as the following hold:

\begin{enumerate}
	\item the definitions of reducibility candidates and computability predicates can be expressed in the language $\C L^{2}$of $\B{PA}^2$. More precisely
	\begin{itemize}
\item there exists a predicate $\TT{CR}[X]$ of $\C L^{2}$, depending on a predicate variable $X$, expressing the property ``$X$ is a reducibility candidate'';

\item for every type $\sigma$ there exists a predicate $\TT{Comp}_{\sigma}[X_{1},\dots, x_{n}](x)$ of $\C L^{2}$ (where contains exactly $n$ free type variables) expressing the property ``$x$ is the code of a term computable of type $\sigma$ relative to the assignment $X_{1},\dots, X_{n}$''. For instance, for $\sigma= \forall X\rho$ , the predicate $\TT{Comp}_{\sigma}$ can be defined inductively as follows

 $$
 \TT{Comp}_{\forall X\rho}[X_{1},\dots, X_{n}](x) := \forall Z\big ( \TT{CR}[Z] \To \TT{Comp}_{\rho}[X_{1},\dots, X_{n},Z](x) \big )
 $$
\end{itemize}

\item since the consistency of System $\B F$ implies that of $\B{PA}^2$, the normalization proof can-not be entirely formalized in $\B{PA}^2$, because of G\"odel's second theorem (the argument is developed in detail in \cite{ptlc1});

\item however, locally, i.e. for every term of System $\B F$, the argument showing that a term is computable of its type and, hence, normalizable, can be formalized in $\B{PA}^2$. In particular, for every term $M$ of type $\sigma$, it can be proved in $\B{PA}^2$that $\TT{CR}[X_{1}] \land \dots \land \TT{CR}[X_{n}] \To \TT{Comp}_{\sigma}[X_{1},\dots, X_{n}](\sharp M)$ (where $\sharp M$ indicates the code of $M$).
\end{enumerate}

The circularity occurring in the first part of the proof of the substitution lemma can be now exposed by formalizing the argument for the computability of a term within $\B{PA}^2$: the justification of the extraction/$\forall$E rule
$$
\AXC{$M\in \forall X\sigma$}
\RL{$Ext$}
\UIC{$M\{\tau\}\in \sigma[\tau/X]$}
\DP
$$
is formalized by an application of $\forall$E in $\B{PA}^2$, i.e. (essentially) \emph{the same rule}, applied to
computability predicates (expressing the fact that $\TT{Comp}_{\tau}$
is a reducibility predicate):
$$
\AXC{$\TT{Comp}_{\forall X\sigma}[\vec X](n)$}
\RL{$\forall$E}
\UIC{$\TT{CR}[\TT{Comp}_{\tau}]\To \TT{Comp}_{\sigma}[\vec X, \TT{Comp_{\tau}}](n)$}
\DP
$$

In a word, comprehension applied to the type $\tau$ is justified by comprehension applied to the predicate $\TT{Comp}_{\tau}$. Here's how Girard comments on this aspect:

\begin{quote}\small
Speaking of circularity, take for instance comprehension: this schema is represented by extraction, but the reducibility of extraction requires comprehension, roughly the one under study. [...] If one carefully looks at the proof of reducibility for System $\B F$, one discovers that the reducibility of type $A$ closely imitates the formula $A$. Which makes that the extraction on $B$ - the only delicate point - is justified by a comprehension on something which is roughly $B$. \cite{Girard1989}
\end{quote}

\subsection{Reflecting circularity in the normalization of Martin-L\"of's system.}\label{subs::3.3} The normalization argument for System $\B F$ with the reducibility candidates technique exploits a set-theoretical translation of the rule of extraction (i.e. of the $\forall$E logical rule). The normalization argument sketched for $\B{ITT}_V$ generalizes the reducibility candidates tech-nique and exploits a set-theoretical universe closely imitating the type structure of the theory. Both arguments manifest a form of what we called reflecting circularity. In order to compare the forms of reflecting circularity at work in them, we reconstruct the part of the argument for $\B{ITT}_V$ which plays a role analogous to the substitution lemma in Girard's proof.

Our presentation of the argument highlights the distinction between computability predicates of the form $Comp_{A}(\OV \xi)$ and reducibility candidates $(s,C)$, in which the relation $C$, a set, is a ``candidate'' for being a computability predicate. Due to the existence of a type of all types, the role of Girard's substitution lemma (whose relevant impredicative step is the one in which it is affirmed that the stipulations defining a computability predicate actually define a set, hence a reducibility candidate) is internalized by the computability predicate $Comp_{V}$: in order to justify judgments of the form $A(\vec X)\in V$ one
has to verify that $Comp_{V}\big (A, (s_{A}(\OV \xi), Comp_{A}(\OV \xi))\big )$ holds, i.e. that the pair $(s_{A}(\OV \xi), Comp_{A}(\OV \xi))$ is a reducibility candidate. In a word, $Comp_{A}(\OV \xi)$, a predicate, becomes a set.

Computations of the form $Comp_{V}\big (A,(s_{A}(\OV \xi), Comp_{A}(\OV \xi))\big )$ occur in two cases in the proof: in the justification of the axiom $V\in V$ and in the justification of the elimination rule for the product type, which corresponds to the application operation, in the case of a product over $V$:

$$
\AXC{$a\in (\Pi x\in V)B(x)$}
\AXC{$C\in V$}
\RL{$\Pi$E}
\BIC{$aC\in B(C)$}
\DP
$$

Similarly to what has been shown for System $\B F$, the definitions of computability predicates and reducibility candidates presented for $\B{ITT}_V$ can be expressed in the language of $\B{ITT}_V$ and the normalization argument, for a given term $a\in A$, can be simulated in the theory. More precisely,
\begin{itemize}

\item there exists a predicate $\TT{CR}(s,C)$ of type $V\to V\to V$ expressing the property ``$(s, C)$ is a reducibility candidate'';

\item for every type $A(\vec x)$ there exists a predicate $\lambda u.\lambda v. \TT{Comp}_{A}(\vec z)(u,v)$ of type $\B{N}\to \tau_{A}(\vec z)\to V$ (depending on variables $\vec z$), where $\B N$ is the type of integers $(\Pi x\in V)((x\to x)\to (x\to x))$, $\tau_{A}(\vec z)$ is a type, depending on $\vec z$, representing the set $s_{A}(\OV \xi)$, expressing the property ``$v$ is a computation of the term coded by $u$ under the assignment $\vec z$'';

\item for every term $a(\vec x)\in A$, there exists a term $\boldsymbol \alpha_{a}(\vec z)\in \tau_{A}(\vec z)$ as well as a term of type $\TT{Comp}_{A}(\vec z)(\sharp a, \boldsymbol \alpha_{a}(\vec z))$.

\end{itemize}

We can reconstruct the form of the predicate $\TT{Comp}_{A}(\vec z)$ in the two most significative cases:
\begin{description}
\item[$i.$:] $\TT{Comp}_{V}(u,v)$ is the conjunction of the predicates expressing the properties ``$u$ is the
code of a type in normal form'' and $\TT{CR}(v)$, respectively;

\item[$ii.$:] $\TT{Comp}_{(\Pi x\in A)B(x)}(\vec z)(u,v) := 
(\Pi m\in \B N)(\Pi w\in \tau_{A})(\TT{Comp}_{A}(\vec z)(m,w) \To 
\TT{Comp}_{B}(\vec zw)(\TT{App}(n,m), vw))$, where $\TT{App}(x,y)$ is a function coding application, i.e. such that $\TT{App}(\sharp a, \sharp b)=\sharp ab$.
\end{description}

The two steps in the normalization argument discussed above can then be simulated inside $\B{ITT}_V$ . First, in the case of the axiom $V\in V$, the justification corresponds to constructing a term of type $Comp_{V}(\sharp V, (\boldsymbol \alpha_{V}, \TT{Comp}_{V}))$, that is, proving that $\sharp V$ is the code of a normal term and that $\TT{CR}((\boldsymbol \alpha_{V}, \TT{Comp}_{V}))$ holds. However, the delicate part comes before the proof of these facts: in order to type the expression $\TT{Comp}_{V}(\sharp V,(\boldsymbol \alpha_{V}, \TT{Comp}_{V}))$, we must show that $\TT{Comp}_{V}$ has type $\B N\to V \to V$: for this the axiom $V\in V$ must then be invoked several times.

In the case of the elimination rule $\Pi$E, we must exploit two occurrences of the same rule, along with the usual ``comprehension axiom'', which is just the fact that $\TT{Comp}_{C}\in V$ (the derivation is displayed in figure \ref{fig::1}).

Similarly to the normalization argument for System $\B F$, the steps of the computability argument for a term can be simulated in the theory itself, and the justification of an impredicative rule is represented by one or more occurrences of the same rule.

As we already mentioned, in accordance with G\"odel's second theorem, the normal-ization argument for System $\B F$ cannot be entirely formalized in $\B{PA}^2$; in particular, the notion of computability is not definable in the language $\C L^{2}$: there exists no second order formula in the language of arithmetics $\TT{Comp}(\sharp A,y)$ such that, for all $A$, $\TT{Comp}(\sharp A,y)$ is equivalent to $\TT{Comp}_{A}(y)$ (which is definable).

The notion of computability behaves thus in a similar way to the model-theoretic notion of validity: by Tarski's theorem we know that the set of (codes of) valid second order formulas is not definable by a second order formula, though the set of codes of valid second order formula of fixed complexity is definable.

However, as $\B{ITT}_V$ is inconsistent\footnote{A note for the reader who went through appendix \ref{app::A}. It must be observed that, though the theory $\B{ITT}_V$ is inconsistent, there exists an interesting literature on its denotational models (see \cite{Coquand1989} for a reconstruction). This should not surprise the reader: in denotational semantics one is interested in the interpretation of the computation content of programs, a question which is, by itself, independent from the fact that a program represents a valid proof. A logically inconsistent programming languages can be nevertheless sound from a computational perspective. Hence, for instance, interesting denotational models of inconsistent typed $\lambda$-calculi can be constructed, by relying on the existence of domains satisfying equations of the form $X=X\to X$ and, in general, fixed point equations of the form $X=\Phi (X)$, where $\Phi$ is a functor over the category $\B{Dom}$ of domains (see for instance \cite{Coquand1989, Longo91}).
} , G\"odel's second theorem cannot be invoked in this case. This is where a difference might well exist between the two arguments: there is no apparent obstacle to express, in the language of $\B{ITT}_V$ :

	\begin{enumerate}
	\item 	a function  $\tau(x)\in \B N \to V$ such that, for all type $A$, $\tau(\sharp A)=\tau_{A}$;

	\item 	a predicate $(\lambda x\in \B N)(\lambda y\in \B N)(\lambda z\in \tau(x))\TT{Comp}(x,y,z)\in (\Pi x\in \B N)(\B N\to \tau(x)\to V)$ such that, for all type $A$, $\TT{Comp}(\sharp A, y,z)$ is equivalent to the predicate $\TT{Comp}_{A}(y,z)$ (which has type $\B N\to \tau_{A}\to V$).
\end{enumerate}

Technically speaking, this possibility arises from the fact that in $\B{ITT}_V$ , contrarily to $\B{PA}^2$, predicates can be defined by induction over codes of formulas (this fact is shown in appendix \ref{app::B}). Hence, given a suitable coding, the clauses defining computability
(which are given by induction over formulas) can be translated into clauses defining the function $\tau$ and the predicate $\TT{Comp}$ by induction over the codes of formulas.

Given these remarks, it does not seem unreasonable to conjecture that a normalization argument for $\B{ITT}_V$ might well be formalizable in the theory itself\footnote{This means constructing a program of type $(\Pi x\in \B N)(\TT{Typable}(x) \to \TT{Norm}(x))$, where the (recursive) predicate $\TT{Typable}(x)$ expresses the fact that $x$ is the code of a well-typed term and the (recursive) predicate $\TT{Norm}(x)$ expresses the fact that $x$ is normalizable.
}. In a word, $\B{ITT}_V$ might well prove its own consistency in a non trivial way (i.e., without deriving it from the proof of a contradiction, as, by Girard's paradox, contradictions are derivable in $\B{ITT}_V$ ).

\section{Harmless and harmful circularities.} In this paper thee consistency arguments for three impredicative logical systems were discussed. Only one of these argument can be considered as a correct proof. Apparently, the second and the third argument exploit a form of circularity, that we styled reflecting circularity, which is not the vicious circularity diagnosed in the first argument. Rather, it is the circularity of justifying the rules of the system by exploiting the same rules in the argument. However, while the second system, System $\B F$, is consistent, the third system, Martin-L\"of's system $\B{ITT}_V$ , just like Frege's system, is not consistent.

Hence, three different forms of circularity should be distinguished: a vicious one, a not vicious, reflecting and apparently harmless one, and a not vicious, reflecting, though harmful, one. The most interesting challenge is then to provide a clear distinction between the last two.

Obviously, a difference exists between the normalization arguments for System $\B F$ and the one for $\B{ITT}_V$ : we trust the normalization argument for System $\B F$ because it can be formalized in standard set-theory $\B{ZF}$, a theory which we believe to be consistent. On the contrary, the normalization argument for System $\B{ITT}_V$ cannot be formalized in set-theory (and, as we argued, plausibly, cannot be formalized in any consistent set-theory). In other words, while we surely have no reason to trust the second argument, as we are aware of a contradiction derivable in $\B{ITT}_V$ , we have external reasons to trust the first argument, as we are not aware of any contradiction derivable in $\B{ZF}$ and we are confident that no one will be found.

One might object that the alleged consistency of $\B{ZF}$ is only an external reason for our trust in the apparently ``good'' impredicativity of System $\B F$, with respect to the ``bad'' impredicativity of $\B{ITT}_V$ , and sheds no light on the different forms of circularities involved.

The distinction between vicious and reflecting circularity reminds a notorious distinc-tion made by Dummett between two different ways in which an argument for the justi-fication of a logical law can be blamed of circularity: on the one side he considers

\begin{quote}\small
[...] the ordinary gross circularity that consists of including the conclusion to be reached among the initial premisses of the argument. \cite{Dummett1991}
\end{quote}

On the other side, he considers arguments that purport

\begin{quote}\small
to arrive at the conclusion that such-and-such a logical law is valid; and the charge is not that this argument must include among its premisses the statement that the logical law is valid, but only that at least one of the inferential steps in the argument must be taken in accordance with that law. We may call this a ``pragmatic'' circularity. \cite{Dummett1991}
\end{quote}

A pragmatically circular argument is one which employs the rule it is up to justify. For instance, an argument for the justification of the rule of modus ponens (i.e. ($\To$E)) is ``pragmatically circular'' if it employs somewhere an instance of the rule of modus ponens.

While everybody would agree that a viciously circular proof bears no epistemic value, according to Dummett, a ``pragmatically'' circular proof might bear some epistemic value:

\begin{quote}\small
[...] if the justification is addressed to someone who genuinely doubts whether the law is valid, and is intended to persuade him that it is, it will fail of its purpose, since he will not accept the argument. If, on the other hand, it is intended to satisfy the philosopher's perplexity about our entitlement to reason in accordance with such a law, it may well do so. \cite{Dummett1991}
\end{quote}

Coherently with his views on the $\B{VCP}$ and on the meaning of universal quantification, Dummett claims that the justification of second order quantification is a viciously circular one (see \cite{Dummett1991b}, \cite{Dummett2006}). Rather, he consider pragmatic the circularity occurring in the case of the justification of first-order logical rules by means of a normalization argument.

However, our analysis of the normalization arguments highlighted how the circularity involved in the proof-theoretic justification of second order logic is not vicious but re-flecting: the extraction on a polymorphic term, corresponding to the rule $\forall$E of second order intuitionistic logic, is justified by an argument whose only logically complex part (the only impredicative step) can be formalized in $\B{PA}^2$by an occurrence of the same rule $\forall$E.

Hence, on the one hand, reflecting circularity would make a normalization argument powerless in a debate over the legitimacy of second order quantification: a Russellian logician should accept that $\B{VCP}$ be violated before assessing the argument justifying the violation of the $\B{VCP}$. On the other hand, a reflectingly circular argument for the justification of second order logic might satisfy the logician who is already convinced of the legitimacy of the laws of this system.

Moreover, also the circularity involved in the proof-theoretic justification of system $\B{ITT}_V$ is reflecting, rather than vicious: the existence of a type of all types (axiom $V\in V$ ), as well as the extraction on a polymorphic term (rule E) are justified by arguments whose only logically questionable part (the only impredicative step) can be formalized in $\B{ITT}_V$ by the axiom $V\in V$ and the rule $\Pi$E respectively.

The fact that an inconsistent system can be justified on reflectingly circular grounds contradicts the impression of the substantial harmlessness of this form of circularity, that one might get from Dummett's discussion on ``pragmatic'' circularity. Moreover, the distinction between ``pragmatic'' and vicious circularity seems to fail to capture the difference between the apparently harmless circularity of System $\B F$ and the harmful circularity of $\B{ITT}_V$ .

A positive answer to the conjecture proposed in the last subsection might shed some light: if the conjecture were true, that is, if a normalization argument for $\B{ITT}_V$ could be entirely formalized inside $\B{ITT}_V$ , then we would have some definite reason to say that the harmful circularity arising in the normalization argument for $\B{ITT}_V$ is not of the same kind as the circularity in the normalization argument for System $\B F$. Two distinct forms of reflecting circularity could be distinguished:
\begin{itemize}
\item a strong form, corresponding to the fact that the entire normalization argument for a system can be formalized within the system. This circularity, by G\"odel's second theorem, is necessarily harmful;
\item a weak form, corresponding to the fact that the justification argument can be locally (i.e. for every single term or proof) formalized in the system. This circularity is present in the normalization argument for System $\B F$ and, being compatible with G\"odel's second theorem, is not necessarily harmful.
\end{itemize}

In any case, the thesis (advocated by Dummett as well as by others - see section 1) that any justification of second order logic must be viciously circular seems contradicted by a closer look at how the \emph{Hauptsatz} for second order logic is actually demonstrated. The circularity at work in this argument is that of justifying a disputed logical rule by an argument whose only disputable part is represented by the application of instances of that very rule.

Rather, our analysis indicates that a justification of a second order impredicative logical system can be provided which is not circular in the sense in which Frege's justification of the \emph{Grundgesetze} was circular (i.e. viciously circular), but circular in the apparently less harmful sense of justifying impredicative quantification in a way which should be judged logically correct from an impredicative standpoint.

\bibliographystyle{alpha}
\bibliography{Victims}

\begin{thebibliography}{Dum91b}

\bibitem[AL91]{Longo91}
Andrea Asperti and Giuseppe Longo.
\newblock {\em Categories, types and structures: an introduction to category
  theory for the working computer scientist}.
\newblock The M.I.T. Press, 1991.

\bibitem[BFSS90]{Bainbridge1990}
E.S. Bainbridge, Peter~J. Freyd, Andre Scedrov, and Philip~J. Scott.
\newblock Functorial polymorphism.
\newblock {\em Theoretical Computer Science}, 70:35--64, 1990.

\bibitem[Car83]{Carnap1983}
Rudolf Carnap.
\newblock The logicist foundations of mathematics (1931).
\newblock In Paul Benacerraf and Hilary Putnam, editors, {\em Philosophy of
  {M}atemathics: selected readings}, pages 41--52. Cambridge University Press,
  1983.

\bibitem[CF58]{Curry1958}
Haskell~B. Curry and Robert Feys.
\newblock {\em Combinatory logic. Vol. {I}}.
\newblock North-Holland, 1958.

\bibitem[CGW89]{Coquand1989}
Thierry Coquand, Carl Gunter, and Glynn Winskel.
\newblock Domain theoretic models of polymorphism.
\newblock {\em Information and Computation}, 81(2):123--167, 1989.

\bibitem[Chu40]{Church}
Alonzo Church.
\newblock A formulation of the simple theory of types.
\newblock {\em Journal of Symbolic Logic}, pages 56--68, 1940.

\bibitem[Coq86]{Coquand1986}
Thierry Coquand.
\newblock An analysis of {G}irard's paradox.
\newblock In {\em First {IEEE} {S}ymposium on {L}ogic in {C}omputer {S}cience,
  Boston}, pages 227--236, 1986.

\bibitem[DB70]{bruijn70}
Nicolaas~Govert De~Bruijn.
\newblock The mathematical language {AUTOMATH}, its usage and some of its
  extensions.
\newblock In {\em Symposium on Automatic Demonstration (Versailles 1968)},
  volume 125 of {\em Lecture Notes in Mathematics}, pages 29--61. Springer,
  1970.

\bibitem[Dos99]{Dosen1999}
Kosta Dosen.
\newblock {\em Cut Elimination in Categories}.
\newblock Trends in Logic. Springer, 1999.

\bibitem[Dum91a]{Dummett1991b}
Michael Dummett.
\newblock {\em Frege: philosophy of mathematics}.
\newblock Duckworth, 1991.

\bibitem[Dum91b]{Dummett1991}
Michael Dummett.
\newblock {\em The logical basis of metaphysics}.
\newblock Columbia University Press, 1991.

\bibitem[Dum06]{Dummett2006}
Michael Dummett.
\newblock The vicious circle principle.
\newblock In {\em Cambridge and {V}ienna: {F}rank {P}. {R}amsey and the
  {V}ienna {C}ircle}, volume~12 of {\em Vienna Circle Institute Yearbook},
  pages 29--33. Springer, 2006.

\bibitem[Fre13]{Frege1893}
Gottlob Frege.
\newblock {\em The Basic Laws of Arithmetic (1893)}.
\newblock ed. by Ebert, P. A. and Rossberg, M., Oxford University Press, 2013.

\bibitem[G\"44]{Godel1944}
Kurt G\"odel.
\newblock Russell's mathematical logic.
\newblock In P.~A. Schilpp, editor, {\em The philosophy of {B}ertrand
  {R}ussell}, pages 123--153. Northwestern University, 1944.

\bibitem[Gal90]{Gallier}
Jean Gallier.
\newblock On {G}irard's "{C}andidats de {R}{\'e}ductibilit{\'e}".
\newblock In {\em Logic and {C}omputer {S}cience}, pages 123--203. Academic
  Press, 1990.

\bibitem[Gen64]{Gentzen64}
Gerhard Gentzen.
\newblock Investigations into logical deduction (1934).
\newblock {\em American Philosophical Quarterly}, 1(4):288--306, 1964.

\bibitem[Gir72]{Girard72}
Jean-Yves Girard.
\newblock {\em Interpr{\'e}tation fonctionnelle et {\'e}limination des coupures
  de l'arithmetique d'ordre sup{\'e}rieur}.
\newblock PhD thesis, Universit{\'e} Paris VII, 1972.

\bibitem[Gir86]{Girard1986}
Jean-Yves Girard.
\newblock The {S}ystem ${F}$ of variable types, fifteen years later.
\newblock {\em Theoretical Computer Science}, 45(2):159--192, 1986.

\bibitem[Gir90]{ptlc1}
Jean-Yves Girard.
\newblock {\em Proof theory and logical complexity, Vol. 1}.
\newblock Studies in proof theory. Elsevier Science, 1990.

\bibitem[Gir98]{Girard1996}
Jean-Yves Girard.
\newblock Light {L}inear {L}ogic.
\newblock {\em Information and Computation}, 143:175--204, 1998.

\bibitem[GLT89]{Girard1989}
Jean-Yves Girard, Yves Lafont, and Paul Taylor.
\newblock {\em Proofs and Types}, volume~7 of {\em Cambridge Tracts in
  Theoretical Computer Science}.
\newblock Cambridge University Press, 1989.

\bibitem[Gol87]{Goldfarb1987}
Warren~D. Goldfarb.
\newblock {\em Jacques {H}erbrand, {L}ogical {W}ritings}.
\newblock Harvard University Press, 1987.

\bibitem[GSS92]{Girard1992}
Jean-Yves Girard, Andre Scedrov, and Philip~J. Scott.
\newblock Normal forms and cut-free proofs as natural transformations.
\newblock In Y.~Moschovakis, editor, {\em Logic from {C}omputer {S}cience},
  volume~21, pages 217--241. Springer-Verlag, 1992.

\bibitem[Hec98]{Heck1998}
Richard~G. Heck.
\newblock Grundgesetze der {A}rithmetik {I} {\S}{\S}29-32 29-32.
\newblock {\em Notre Dame Journal of Formal Logic}, 38:437--474, 1998.

\bibitem[HM99]{Mitchell1999}
Robert Harper and John~C. Mitchell.
\newblock Parametricity and variants of {G}irard's {J} operator.
\newblock {\em Information Processing Letters}, 70(1):1--5, 1999.

\bibitem[How80]{Howard80}
William~A. Howard.
\newblock The formula-as-types notion of construction (1969).
\newblock In {\em To {H}.{B}. {C}urry. {E}ssays on {C}ombinatory {L}ogic,
  {L}ambda {C}alculus and {F}ormalism}. Academic Press, 1980.

\bibitem[HRR14]{Hermida2014}
Claudio Hermida, Uday~S. Reddy, and Edmund~P. Robinson.
\newblock Logical relations and parametricity - {A} {R}eynolds programme for
  category theory and programming languages.
\newblock In {\em Proceedings of the Workshop on Algebra, Coalgebra and
  Topology (WACT 2013)}, volume 303 of {\em Electronic Notes in Theoretical
  Computer Science}, pages 149--180. Elsevier, 2014.

\bibitem[Hur95]{Hurkens95}
Antonius J.~C. Hurkens.
\newblock A simplification of {G}irard's paradox.
\newblock In {\em {TLCA} 95, {S}econd {I}nternational {C}onference on {T}yped
  {L}ambda {C}alculi and {A}pplications}, pages 266--278, 1995.

\bibitem[KLN04]{Kamare2004}
Fairouz Kamareddine, Twan Laan, and Rob Nederpelt.
\newblock {\em A {M}odern {P}erspective on {T}ype {T}heory, from its {O}rigins
  until {T}oday}.
\newblock Kluwer Academics, 2004.

\bibitem[Kri09]{Krivine09}
Jean-Louis Krivine.
\newblock Realizability in classical logic.
\newblock In {\em Interactive models of computation and program behaviour.
  Panoramas et synth{\`e}ses}. Soci{\'e}t{\'e} Math{\'e}matique de France,
  2009.

\bibitem[LF97]{Longo1997}
Giuseppe Longo and Thomas Fruchart.
\newblock Carnap's remarks on impredicative definitions and the genericity
  theorem.
\newblock In {\em Logic, {M}ethodology and {P}hilosophy of {S}cience: {L}ogic
  in {F}lorence}. Kluwer, 1997.

\bibitem[LMS93]{Longo1993}
Giuseppe Longo, Kathleen Milsted, and Sergei Soloviev.
\newblock The genericity theorem and the notion of parametricity in the
  polymorphic $\lambda$-calculus.
\newblock {\em Theoretical Computer Science}, 121:323--349, 1993.

\bibitem[LS86]{Lambek1986}
Joachim Lambek and Philip~J. Scott.
\newblock {\em Introduction to higher order categorical logic}.
\newblock Cambridge University Press, 1986.

\bibitem[ML70]{ML70}
Per Martin-L\"{o}f.
\newblock A theory of types.
\newblock Unpublished manuscript, 1970.

\bibitem[Par93]{parigot}
Michel Parigot.
\newblock Classical proofs as programs.
\newblock In {\em Kurt {G}\"odel {C}olloquium}, volume 713 of {\em Lecture
  Notes in Computer Science}, pages 263--276. Springer-Verlag, 1993.

\bibitem[Poi06]{Poincare1906}
Henri Poincar\'e.
\newblock Les math\'ematiques et la logique.
\newblock {\em Revue de M\'etaphysique et de Morale}, 14(3):294--317, 1906.

\bibitem[Pra68]{Prawitz1968}
Dag Prawitz.
\newblock Hauptsatz for higher order logic.
\newblock {\em The Journal of Symbolic Logic}, 33(3):452--457, 1968.

\bibitem[Pra71]{Prawitz1971}
Dag Prawitz.
\newblock Ideas and results in proof theory.
\newblock In J.E. Fenstad, editor, {\em Proceedings of the 2nd {S}candinavian
  {L}ogic {S}ymposium ({O}slo)}, volume~63 of {\em Studies in logic and
  foundations of mathematics}. North-Holland, 1971.

\bibitem[Rey74]{Reynolds74}
John~C. Reynolds.
\newblock Towards a theory of type structure.
\newblock In {\em Programming Symposium}, pages 408--423. Springer-Verlag,
  1974.

\bibitem[Rey83]{Reynolds1983}
John~C. Reynolds.
\newblock Types, abstraction and parametric polymorphism.
\newblock In R.E.A. Mason, editor, {\em Information Processing '83}, pages
  513--523. North-Holland, 1983.

\bibitem[Rus06]{Russell1906}
Bertrand Russell.
\newblock On some difficulties in the theory of transfinite numbers and order
  types.
\newblock In {\em Proceedings of the {L}ondon {M}athematical {S}ociety},
  volume~4, pages 29--53, 1906.

\bibitem[Rus08]{Russ08}
Bertrand Russell.
\newblock Mathematical logic as based on the theory of types.
\newblock {\em Americal Journal of Mathematics}, 30(3):222--262, 1908.

\bibitem[Rus38]{Russell1903}
Bertrand Russell.
\newblock {\em The {P}rinciples of {M}athematics (2nd ed.) [First published
  1903]}.
\newblock W.W. Norton \& Company, 1938.

\bibitem[RW10]{Russell1910}
Bertrand Russell and Alfred~North Whitehead.
\newblock {\em Principia {M}athematica}.
\newblock Cambridge University Press, 1910.

\bibitem[Str67]{Strachey1967}
Christopher Strachey.
\newblock Fundamental concepts in programming languages.
\newblock {\em Higher Order and Symbolic Computation}, 13:11--49, 1967.

\bibitem[Sun99]{Sundholm1999}
G\"oran Sundholm.
\newblock Intuitionism and logical tolerance.
\newblock In {\em Alfred {T}arski and the {V}ienna {C}ircle}, volume~6 of {\em
  Vienna Circle Institute Yearbook}, pages 135--148. Springer, 1999.

\bibitem[Tai67]{Tait1967}
William~W. Tait.
\newblock Intensional interpretation of functionals of finite type {I}.
\newblock {\em Journal of Symbolic Logic}, 32(2):198--212, 1967.

\bibitem[Tai68]{Tait1966}
William~W. Tait.
\newblock A nonconstructive proof of {G}entzen's {H}auptsatz for second order
  predicate logic.
\newblock {\em Journal of Symbolic Logic}, 33(2):289--290, 1968.

\bibitem[Tak67]{Takahashi1967}
Moto-o Takahashi.
\newblock A proof of the cut-elimination theorem in simple type-theory.
\newblock {\em Journal of the Mathematical Society of Japan}, 19(4):399--410,
  1967.

\bibitem[TPP16]{StudiaLogica}
Luca Tranchini, Paolo Pistone, and Mattia Petrolo.
\newblock The naturality of natural deduction.
\newblock \url{https://arxiv.org/abs/1607.06603v1}, 2016.

\end{thebibliography}

\appendix

\section{Denotational semantics and the functorial interpretation of polymorphism.}\label{app::A} The brief journey in domain theory and category theory that follows is thought to give the reader a first flavor of the interesting mathematical problems which constitute the technical counterpart of the proof-theoretic problem of impredicativity. It will be suggested that a coherent understanding of the provability conditions for second order logic arises from the functorial interpretations of category theory, where the uni-formity of polymorphism finds a simple and elegant explanation through the notion of \emph{natural transformation}.

\subsection{Continuity and the mutilation condition.} Denotational semantics is the branch of theoretical computer science dedicated to the representation of the mathematical invari-ants of the execution of programs. As its name suggests, at the basis of this discipline stands the ``Fregean'' intuition that the denotation of a program should be independent of the transformations the latter runs through during its execution: for instance, the denotation of the trivial program $\TT{Add}(\TT 5, \TT 7)$ which sums $\TT 5$ and $\TT 7$ should be the same of the result of its execution, i.e. of the output $\TT{12}$.

From a logical point of view, a major difference exists between denotational semantics and model-theoretic semantics: while the latter characterizes provability only, the former provides a semantics of proofs, that is, an interpretation in which proofs are assigned ``denotations'' which are invariant under normalization, i.e. under the transformations which eliminate detours.

The denotational semantics of typed $\lambda$-calculi are usually given by interpreting types as particular mathematical structures, called \emph{domains}, and by interpreting terms as certain subsets of these domains. Intuitively, a domain can be thought as a set of information states, endowed with an order relation, corresponding to the fact of a state being ``more informative'' than another: a state $a'$ is more informative than a state $a$ if the information provided by $a'$ is coherent with that in $a$, and the former has more information than $a$.

A different way to present a domain is as a set of informations plus a relation of coherence between informations. An information state is then defined as a set of coherent informations and the order relation between information states is given by inclusion.

Given a domain $A$, let us call then $coh(A)$ the set of information states of $A$ (i.e. the set of coherent subsets of $A$). Given two domains $A,B$, one can define the domain $A\to B$ whose elements are pairs $(a, z)$ made of a finite information state $\in coh(A)$\footnote{We refer here to a special class of domains, called \emph{coherence spaces}, see \cite{Girard1989}.
} and of a state in $B$. A function $f$ from $coh(A)$ to $coh(B)$ can be thought as the infinite information state of $A \to B$ made of all the pairs $(a, z)$ such that $z \in f(a)$. These pairs are then single pieces of information about $f$. The coherence relation for $A\to B$ is the following: two pairs $(a, z), (a', z')$ are coherent when, anytime the informations in $a$ and $a'$ are (distinct and) pairwise coherent for $A$, then $z$ and $z'$ are (distinct and) coherent for $B$. In particular, if $a'$ is more informative than $a$, then $z'$ cannot be incoherent with $z$. Hence, the information states of $A\to B$ correspond to partial functions from $coh(A)$ to $coh(B)$.

From the fact that, in a pair $(a, z)$, $a$ is a finite information state, it follows that the functions $f$ from $coh(A)$ to $coh(B)$ corresponding to information states in $A \to B$ satisfy a \emph{continuity condition}: the value of $f$ for an arbitrary (possibly infinite) $\in coh(A)$ can be approximated from the values of $f$ on the finite information states contained in
$a$. Indeed, if $z\in f(a)$, for some $a \in coh(A)$, then there exists $a'\subset_{fin}a$ such that $z\in f(a')$\footnote{This property can be stated as the fact that $f(a)$ is the direct union of all $f(a')$, where $a'$ varies among all finite subsets of $a$.
}.

The continuity condition has a natural interpretation in terms of computability: in order to compute the result of applying a program to an input containing a possibly infinite amount of information (think of a real number, i.e. to an infinite list of digits), one cannot exploit more than a finite amount of information about the input. For instance, a function from real numbers to integers giving value $1$ only if all digits of its input are, say, minor than $3$, would not be continuous (nor computable)\footnote{	From a mathematical point of view, the situation is analogous to that of continuous functions in real analysis: since every real number is the limit of a converging series of rationals, the value of a continuous function $f$ over a real number $r$ can be approximated from its ``finite'' approximations, i.e. the values of $f$ over the rationals converging to $r$. Moreover, the fact that continuous functions over the reals are determined by their rational values implies that the cardinality of the set $\C C(\mathbb R)$ of continuous functions over the reals is exactly that of the reals ($|\C C(\mathbb R)| = |\mathbb R|$). Similarly, one can show that, if $A$ and $B$ are countable domains, the set of continuous functions from $A$ to $B$ is countable (contrarily to the set of all functions from $A$ to $B$). This fact is at the heart of the great flexibility of domain theory, which allows to define domains $A$ which are isomorphic to the domain of continuous functions $A \to A$.
}. In a word, programs must correspond to continuous functions over information states.

The interpretation of System $\B F$ in domains requires a delicate abstraction: one has to interpret polymorphic programs, i.e. special functions whose domain is the class of all types. These functions should then correspond to information states over a domain of the form $A \to B$, where $A$ is a domain whose information states are all domains! From the story of $\B{ITT}_V$ we know that a domain of all domains is probably not the right direction to follow. In particular, since domains form a proper class, there can be nothing like a domain (i.e. a set) of all domains. Nevertheless, a way out of this circularity is
provided by category theory: one can define a category of domains and use it as if it were a domain!

Actually, several distinct categories of domains can be defined, depending on the choice of the class of morphisms between domains. For reasons connected with the interpretation of implication, it is convenient to restrict the class of morphisms not to contain all continuous functions, but just injective continuous functions\footnote{More precisely, the standard choice are retraction pairs (see \cite{Girard1986, Coquand1989, Longo91}). The important property is the fact that a morphism $f$ always has an inverse $f^{-1}$. Technically, this restriction makes the interpretation of implication a covariant functor.
}.

The category $\B{Dom}$ of domains can be seen as a special domain itself: finite domains play the role of informations, two finite domains $A,B$ being coherent when there exists a domain $C$ and morphisms $f:A\to C, g:B\to C$. An arbitrary domain can then be seen as the information state made of all its finite subdomains. The order relation between information states is simply given by the morphisms, i.e. injective continuous functions, between domains.

A polymorphic program, say a program of type $\forall X(X \to X)$, should then correspond to a continuous function $f$ associating, with any domain $A$, a morphism from $A$ to $A$; as usual, a problem of circularity arises here, as we are trying to define a new domain (the interpretation of $\forall X(X \to X)$) whose information states correspond to functions defined over all domains, including the one we are defining.

However, a way out of the circularity is provided by the continuity condition: as soon as $f$ is continuous, its values over an arbitrary domain are approximated (and hence univocally determined) by its values over finite domains. Hence the ``circular'' value of $f$
(i.e. the value of $f$ when the argument is the domain of $f$) can be computed starting from the values of $f$ on finite domains, making circularity disappear. We do not enter into the technical details of this delicate construction (the reader can look at \cite{Girard1986, Coquand1989, Longo91}). The important aspect is that the interpretation of $\forall X(X \to X)$ is an infinite domain entirely described in terms of finite domains.

The continuity condition can be considered as a uniformity condition: a term of type
$\forall X(X \to X)$ becomes a family $(t_{A})_{A\in \B{Dom}}$ of morphisms, where for any domain $A$, $t_{A}$ is a morphism from $A$ to $A$; now, continuity implies that the morphism $t_{A}$ is approximated by the morphisms $t_{A'}$, where $A'$ is a finite subdomain of $A$. This means that $t_{A}$ must be the union of all $t_{A'}$, for $A'$ a finite subset of $A$ or, equivalently, that the restriction (the ``mutilation'') of $t_{A}$ to a finite subset $A'$ must be exactly $t_{A'}$. This condition (called \emph{mutilation condition} in \cite{Girard1986}) says, intuitively, that the dependency of $t_{A}$ on $A$ is so flawed that one can compute $t_{A}$ starting from finite objects $A'$ and then extend by continuity. In particular, the family $(t_{A})_{A\in \B{Dom}}$, seen as an application from domains to morphisms between domains, cannot distinguish between distinct infinite domains.

Continuous families are very scarce: the mutilation condition dramatically collapses the degrees of freedom in the construction of the family $(t_{A})_{A\in \B{Dom}}$. For instance, there is exactly one continuous family $(t_{A})_{A\in \B{Dom}}$ (up to isomorphism) such that $t_{A}\in A\to A$: this is the identity family $(id_{A})_{A\in \B{Dom}}$, corresponding to the polymorphic identity $\TT{ID}$.

\subsection{Functors and natural transformations.} The mutilation condition can be seen as an example of a functorial interpretation of polymorphism. It is well-known that logical connectives correspond to functorial constructions (see \cite{Lambek1986, Dosen1999}). Usually, a construction
leading from objects $A_{1},\dots, A_{n}$ to an object $\kappa(A_{1},\dots, A_{n})$ in a given category $\B C$ is called functorial when it can be extended in a somehow canonical way into a construction $\kappa$ defined on all the objects in the category.

The canonicity of the extension is expressed as follows: let the object $\kappa(A_{1},\dots, A_{n})$ be
constructed from objects $A_{1},\dots, A_{n}$ and the object $\kappa(B_{1},\dots, B_{n})$ be constructed from
objects $B_{1},\dots, B_{n}$ such that all $B_{i}$ can be mapped to $A_{i}$ by some morphism $f_{i}$. Then the
two constructions can be related: a morphism $\kappa(f_{1},\dots, f_{n})$  from $\kappa(B_{1},\dots, B_{n})$ to $\kappa(A_{1},\dots, A_{n})$ can be constructed from the morphisms $f_{1},\dots, f_{n}$. In a sense, a functorial construction is so general that, when applied to arbitrarily related objects, it produces related objects\footnote{This is also the starting point of Reynolds' parametricity, based on an extension of the notion of functorial constructions from preservation of morphisms to preservation of binary relations (see \cite{Reynolds1983, Hermida2014}).
}. A functor $F (X)$ over a category $\B C$ can thus be defined as a map acting over the objects of $\B C$ and over its morphisms\footnote{More precisely, a covariant functor $ F (X)$ over a category $\B C$ is given by a map over the object of $\B C$ and a map over the morphisms of $\B C$, where for all objects $A,B$ and $f : A \to B, F (f) : F (A) \to F (B)$, preserving identity morphisms and composition, i.e. such that $F (id_{A}) = id_{F (A)}$ and $F (g)\circ F (f) = F (g\circ f)$ for all objects $A,B,C$ and morphisms $f:A\to B, g:B\to C$.
}.

The starting point of the functorial interpretation is the acknowledgement that most construction over types are functorial. For instance, the construction $F (X) = X \times X$ which, to any object $A$ of a category $C$, associates its product $A\times A$ is functorial: take another object $B$ and a morphism $f : A \to B$; then there exists a morphism $F (f) = f\times f$ which goes from $F (A) = A\times A$ to $F (B) = B \times B$.

In usual categorial interpretations of typed $\lambda$-calculi (and proof systems), types correspond to the objects of a given category $\B C$ and terms correspond to morphisms between such objects. One can think of a term of type $\sigma\to\tau$ as a morphism between the objects associated, respectively, to $\sigma$ and $\tau$.

A more abstract categorial interpretation arises then from the remark that a type $\sigma$, depending on a free variable $X$, corresponds to a functorial construction $F_{\sigma} (X)$ i.e. to a functor over the category $\B C$. As soon as types and become functors $F _{\sigma}(X)$ and $F_{\tau} (X)$, a term of type $\sigma\to \tau$ becomes a natural transformation\footnote{	In the general case, terms are actually interpreted by \emph{dinatural} transformations, a generalization of the notion of natural transformations in the case in which the functors $F (X)$ and $F (X)$ are multi-variant (as is the case when and contain an implication), see \cite{Bainbridge1990, Girard1992}.
} between $F_{\sigma}$ and $F_{\tau}$ , i.e. a family of morphisms $(t_{A})_{A\in Ob(\B C)}$ from $F_{\sigma} (A)$ to $F_{\tau} (A)$ satisfying a naturality condition of the form

$$
\xymatrix{
F_{\sigma}(A)   \ar[r]^{t_{A}} \ar[d]_{F_{\sigma}(f)}  & F_{\tau}(A) \ar[d]^{F_{\tau}(f)} \\
F_{\sigma}(B) \ar[r]_{t_{B}} & F_{\tau}(B)
}
$$

for all objects $A,B$ and morphism $f : A \to B$. The naturality condition warrants that the morphisms $t_{A}$ behave in the same way for all $A$. For instance, if we consider the case of a natural family $(d_{A})_{A\in Ob(\B C)}$ of morphism from the identity functor $X$ to the product
functor $X	\times X$, the naturality condition becomes:

$$
\xymatrix{
A \ar[r]^{d_{A}} \ar[d]_{f} & A\times A \ar[d]^{f\times f}  \\
B \ar[r]_{d_{B}} & B\times B
}
$$

The diagram above says that $d_{A}$ and any other $d_{B}$, with $B$ related to $A$ by a morphism $f$, are related by the equation $(f\times f) \circ d_{A} = d_{B}\circ f$. Intuitively, the only way to pass from $A$ to $A\times A$ without any information about $A$ is by duplication. Indeed there is exactly one family satisfying the naturality condition above, the diagonal family $d_{A}(x)=(x,x)$: natural families are, in a sense, so uniform that they have very few degrees of freedom.

The mutilation condition, which says that $t_{A}$ can be computed from the finite $t_{A'}$, with $A'$ related to $A$ by an inclusion morphism, can be seen as a special case of the naturality condition. In the category $\B{Dom}$ types $\sigma,\tau$ become continuous functors $F_{\sigma}, F_{\tau}$, i.e. functors whose values can be approximated from values on finite domains, and, inclusion morphisms can be inverted, one has the diagram below:
$$
\xymatrix{
F_{\sigma}(A) \ar[r]^{t_{A}} \ar[d]_{F_{\sigma}(\iota)^{-1}} & F_{\tau}(A) \\
F_{\sigma}(A') \ar[r]_{t_{A'}}  & F_{\tau}(A') \ar[u]_{F_{\tau}(\iota)}
}
$$
(where $\iota: A'\to A$ is the inclusion morphism) which says that $t_{A}$ can be computed starting from $t_{A'}$ and $\iota$.
The naturality condition appears as a very perspicuous characterization of the uniformity of polymorphic programs, as the latter should correspond to functions over types ``that behave in the same way for all types'' \cite{Reynolds1983}.

Functorial interpretations of typed $\lambda$-calculi and natural deduction (see \cite{Bainbridge1990, Girard1992}) show that terms (i.e. proofs) correspond to natural transformations, i.e. to uniform families. Actually, one can dare to say that proofs are natural transformations, as the functorial interpretation can be reproduced over the syntactic category generated by propositions and proofs themselves (this perspective, and some of its relevant consequences in proof theory, are investigated in \cite{StudiaLogica}, where a ``functorial'' presentation of natural deduction is provided).

\section{Predicates defined by recursion in $\B{ITT}_V$.}\label{app::B}
 We show how to define predicates by recursion in $\B{ITT}_V$ . This fact marks a basic difference with respect to $\B{PA}^2$. As discussed in subsection \ref{subs::3.3}, in $\B{PA}^2$one cannot define a predicate $\TT{Comp}(x, y)$ such that, for each formula $A \in \C L^{2}$, the predicate $\TT{Comp}_{A}(y)$ is equivalent to $\TT{Comp}(\sharp A, y)$. On the contrary, by defining predicates by recursion over the codes of formulas and by using standard recursive techniques, the stipulations given in subsection \ref{subs::3.3} can be turned into a definition in $\B{ITT}_V$ of the following:
 \begin{itemize}

\item a function $\tau\in (\Pi x\in \B N) \TT{FV}(x)\to V$, where $\TT{FV}\in \B N \to V$ is a predicate such that, for each type $A(x_{1},\dots, x_{n})\in V$, for $x_{1}\in A_{1}$, \dots $x_{n}\in A_{n}(x_{1},\dots, x_{n-1})$, $\TT{FV}(\sharp A)= \tau(\sharp A_{1})\times \dots \times \tau(\sharp A_{n})$ (and $\TT{FV}(\sharp a)= \B 1= (\Pi x\in V)(x\to x)$ if $a$ has no free variable), $\tau(\sharp A)\in \TT{FV}(\sharp A)\to V$ and for all $(\xi_{1},\dots, \xi_{n}\in \TT{FV}(\sharp A)$, $\tau(\sharp A)(\xi_{1},\dots, \xi_{n})\in V$ represents the set $s_{A}(\xi_{1},\dots, \xi_{n})$;

\item a predicate $\TT{Comp}\in (\Pi x\in \B N)(\TT{FV}(x)\times \B N\times \tau(x)\to V)$ such that, for each type $A(x_{1},\dots, x_{n})\in V$, where $x_{1}\in A_{1},\dots, x_{n}\in A_{n}(x_{1},\dots, x_{n-1})$, $\TT{Comp}(\sharp A)\in \TT{FV}(\sharp A)\times \B N\times \tau(\sharp A)\to V$ is a predicate $\B N$ over $\tau_{A}$ (with parameters in $\TT{FV}(\sharp A)$ corresponding to $\TT{Comp}_{A}$;

\item a function $\boldsymbol \alpha\in (\Pi x\in \B N)(\Pi z\in \TT{FV}(x))\tau_{z}(\TT{typ}(x))$ where

\begin{itemize}

\item $\TT{FV}(\sharp a(x_{1},\dots, x_{n}))$, for $x_{1}\in A_{1},\dots, x_{n}\in A_{n}(x_{1},\dots, x_{n-1})$, is $\tau(\sharp A_{1})\times \dots \times \tau(\sharp A_{n})$;
\item $\TT{typ}\in \B N\to \B N$ is a primitive recursive predicate such that, for all $a\in A$, $\TT{typ}(\sharp a)=\sharp A$;

\item $\tau_{z}(\sharp a)$ is $(\lambda u\in \TT{FV}(\sharp a))\tau(\sharp a)\big (\pi(1)(q_{1})\dots \pi(m)(q_{m})\big )\in \TT{FV}(\sharp a)\to V$, where $m$ is the number of free occurrences of $A$ and $q_{i}$ is either $z$ or $u$ depending on whether $x_{i}$ occurs free in both $a$ or only in $A$ (the primitive recursive function $\tau_{z}$ can be constructed by standard coding techniques).

\end{itemize}
$\boldsymbol \alpha$ is such that, for each closed term $a\in A$, $\boldsymbol \alpha(\sharp a)\in \tau(\sharp A)$ represents $\alpha_{a}$.
\end{itemize}

The definition of predicates by recursion is showed by the two propositions below.

\begin{proposition}\label{flat}
Let $k_{1},\dots, k_{m}\in \B N \to \B N$ be functions such that, for all $x\in \B N$, $k_{i}(x+1)< x+1$. 
Then, then, for all type $A$, $g\in\B N^{p}\to A$ and $h\in \B N^{p+m+1}\to A$, there exists a function $f\in \B N^{p+1}\to A$ such that

\begin{equation*}\small
\begin{split}
f(x_{1},\dots, x_{n},0)  &= g(x_{1},\dots, x_{n}) \\
f(x_{1},\dots, x_{n},y+1) & = h(x_{1},\dots, x_{n}, y+1, f(x_{1},\dots, x_{n},k_{1}(y+1)), \dots, f(x_{1},\dots, x_{n},k_{m}(y+1)))
\end{split}
\end{equation*}

\end{proposition}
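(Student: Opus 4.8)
The plan is to reduce this course-of-values recursion to ordinary primitive recursion over $\B N$, by the standard approximation trick for course-of-values recursion. For fixed parameters $\vec x=x_{1},\dots,x_{p}$ I would build an approximation function $F(\vec x,-)\in\B N\to(\B N\to A)$ by recursion on its numeric argument, designed so that $F(\vec x,y)$ agrees with the sought $f(\vec x,-)$ on all inputs $\le y$, and then set $f(\vec x,y):=F(\vec x,y)(y)$. (One could instead carry a finite list storing $f(\vec x,0),\dots,f(\vec x,y)$; I prefer the function presentation, which needs only types already at hand.) Concretely, using the booleans $\TT{Bool}=(\Pi X\in V)(X\to X\to X)$, a definable test $\TT{le}\in\B N\to\B N\to\TT{Bool}$, and the definable conditional $\TT{if}_{A}(b,u,v):=b\{A\}\,u\,v\in A$, I would put $F(\vec x,0)(j):=g(\vec x)$ and $F(\vec x,y+1)(j):=\TT{if}_{A}\big(\TT{le}(j,y),\,F(\vec x,y)(j),\,h(\vec x,y+1,F(\vec x,y)(k_{1}(y+1)),\dots,F(\vec x,y)(k_{m}(y+1)))\big)$. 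The primitive recursion on $\B N$ used to define $F$ is itself extracted from the iterator of $\B N$ together with pairing in the usual way (equivalently, one iterates the step $(n,\varphi)\mapsto(n+1,\dots)$ from $(0,\lambda j.g(\vec x))$), so the two clauses above hold provably, by an internal induction --- which I take to be the sense of ``$=$'' in force in the statement.

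Then I would verify the two recursion equations by induction on $y$, carried by the invariant ``$F(\vec x,y)(j)=f(\vec x,j)$ for all $j\le y$''. The base case $y=0$ forces $j=0$ and reads $F(\vec x,0)(0)=g(\vec x)$, which is simultaneously the invariant at $0$ and the first recursion equation. For the step, assuming the invariant at $y$: if $j\le y$ then $\TT{le}(j,y)=\TT{true}$, so $F(\vec x,y+1)(j)=F(\vec x,y)(j)=f(\vec x,j)$ by the hypothesis; if $j=y+1$ then $\TT{le}(y+1,y)=\TT{false}$, so $F(\vec x,y+1)(y+1)=h(\vec x,y+1,F(\vec x,y)(k_{1}(y+1)),\dots,F(\vec x,y)(k_{m}(y+1)))$, and here I invoke the descent hypothesis $k_{i}(y+1)<y+1$, i.e.\ $k_{i}(y+1)\le y$, so that the invariant at $y$ gives $F(\vec x,y)(k_{i}(y+1))=f(\vec x,k_{i}(y+1))$; unwinding $f(\vec x,y+1)=F(\vec x,y+1)(y+1)$ then yields exactly $f(\vec x,y+1)=h(\vec x,y+1,f(\vec x,k_{1}(y+1)),\dots,f(\vec x,k_{m}(y+1)))$, the second equation.

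I expect the main obstacle to be entirely bookkeeping rather than conceptual: threading the propositional equality proofs for the recursion clauses of $F$ (and for the basic facts about $\TT{le}$, $\TT{if}_{A}$ and the chosen coding) through the induction, and keeping the case split on $\TT{le}(j,y)$ honest. Two points are worth isolating. First, the single genuinely essential appeal to the hypothesis $k_{i}(x+1)<x+1$ is in the case $j=y+1$, where it guarantees that the recursive arguments $k_{i}(y+1)$ fall in the range $\le y$ on which $F(\vec x,y)$ has already been certified correct; the definition of $F$ itself never uses it. Second, nothing in the construction depends on the particular type $A$: $\TT{Bool}$, $\TT{le}$, $\TT{if}_{A}$ and the primitive recursor on $\B N$ are available uniformly for \emph{any} type $A$ --- even $A=V$ --- so no circularity is incurred when the codomain is ``large''. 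This is precisely why $\B{ITT}_V$ (indeed already $\B F$) suffices, and it contrasts with $\B{PA}^2$, where recursions over codes of formulas are unavailable.
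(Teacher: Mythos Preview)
Your argument is correct and follows the same overall strategy as the paper: reduce course-of-values recursion to ordinary primitive recursion by carrying along the entire history of values up to $y$, then project out the last component. The only genuine difference is in how the history is packaged. The paper stores the values $f(\vec x,0),\dots,f(\vec x,y)$ as an element of the list type $\TT{List}[A]=(\Pi x\in V)(x\to(A\to x\to x)\to x)$, building $F(\vec x,y+1)$ by concatenating the new value onto $F(\vec x,y)$ and extracting with an indexing function $\delta_{A}\in\B N\to\TT{List}[A]\to A$; you instead represent the history as a total function in $\B N\to A$, updated via a boolean case split on $\TT{le}(j,y)$. Both encodings are standard and interchangeable here; yours trades the list constructors for $\TT{Bool}$, $\TT{le}$ and $\TT{if}_{A}$, and has the minor advantage that you give an explicit inductive verification of the invariant $F(\vec x,y)(j)=f(\vec x,j)$ for $j\le y$, which the paper leaves implicit.
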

\begin{proof}
Let $F(\vec x,y)=\Pi_{z\leq y}\pi(y)^{\sharp f(\vec x, z)}\in \B N^{p+1}\to \TT{List}[A]$, where $\pi(x)$ is the recursive function which enumerates primes and
$$
 \TT{List}[X]= (\Pi x\in V)(x \to (X \to x\to x)\to x)
 $$
 is the type of lists of type $X$,
be the function defined by primitive recursion as follows:

\begin{equation*}
\begin{split}
 F(\vec x, 0) & = g(\vec x) \\
F(\vec x, y+1) & = F(\vec x,y) \ \TT{conc} \  h(\vec x, \delta_{V}(k_{1}(y), F(\vec x,y)), \dots, \delta_{V}(k_{m}(y), F(\vec x,y)))
\end{split}
\end{equation*}
Where $\delta_{X}\in\B N \to \TT{List}[X]\to X $ is the primitive recursive function such that \\ $\delta_{X}(i, \langle x_{1},\dots, x_{n}\rangle)=x_{i}$, for $i\leq n$.
Then we can put $f(\vec x, y)=\delta_{A}(y, F(\vec x, y))\in \B N^{p+1}\to A$.

\end{proof}

The construction of this proposition enables, for instance, the construction of a function $\flat:\B N\to V$ such that $\flat\sharp A=A$. The definability of $\flat$ marks a sensitive difference with System $\B F$ (and $\B{PA}^{2}$): in those systems there's no way to pass from the code of a type (or of a proposition) to the type (or proposition) itself.

A more general result can be shown with the aid of the $\flat$ function:

\begin{proposition}\label{rec}
Let $k_{1},\dots, k_{m}\in \B N \to \B N$ be functions such that, for all $x\in \B N$, $k_{i}(x+1)< x+1$. 
Let $C(x_{1},\dots, x_{n})$ and $D(x_{1},\dots, x_{n},y,y_{1},\dots, y_{n})$ be types.
Then, for all $g\in (\Pi x_{1},\dots, x_{n}\in \B N)C(x_{1},\dots, x_{n})$\footnote{Where $(\Pi x_{1},\dots, x_{n}\in A)B(x_{1},\dots, x_{n})$ denotes the product $(\Pi x_{1}\in A)\dots (\Pi x_{n}\in A)B(x_{1},\dots, x_{n})$.} and $h\in (\Pi x_{1},\dots, x_{n},y, y_{1},\dots, y_{m}\in \B N)D(x_{1},\dots, x_{n}, y, y_{1},\dots, y_{m})$, there exists a type $A(x_{1},\dots, x_{n},y)$ and a function $f\in (\Pi x_{1},\dots, x_{n},y\in \B N)A(x_{1},\dots, x_{n},y)$ such that

\begin{equation*}\small
\begin{split}
A(x_{1},\dots, x_{n},0) &=C(x_{1},\dots, x_{n}) \\
 f(x_{1},\dots, x_{n},0)  &= g(x_{1},\dots, x_{n}) \\
 A(x_{1},\dots, x_{n},y+1)&= D(x_{1},\dots, x_{n},y+1,f(x_{1},\dots, x_{n},k_{1}(y+1)), \dots, f(x_{1},\dots, x_{n},k_{m}(y+1)))\\
 f(x_{1},\dots, x_{n},y+1)  &= h(x_{1},\dots, x_{n}, y+1, f(x_{1},\dots, x_{n},k_{1}(y+1)), \dots, f(x_{1},\dots, x_{n},k_{m}(y+1)))
 \end{split}
\end{equation*}

\end{proposition}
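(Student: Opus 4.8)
The plan is to read Proposition~\ref{rec} as the ``fibered'' version of Proposition~\ref{flat}: the course-of-values recursion now has to produce simultaneously a family of types $A(\vec x,y)\in V$ \emph{and} a dependent family of terms $f(\vec x,y)\in A(\vec x,y)$, each half feeding the other (I read $D$ and $h$ here as depending on the earlier values $f(\vec x,k_i(y+1))$ at their own types $A(\vec x,k_i(y+1))$). First I would break this mutual recursion by pinning the type family down via its codes, and only afterwards reconstruct the term family over the now-fixed $A$.

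\emph{Step 1 (the type family, through codes and $\flat$).} Fix a G\"odel coding $\sharp(-)$ of the syntax and recall, from the remark following Proposition~\ref{flat}, the function $\flat\in\B N\to V$ with $\flat(\sharp B)=B$. Using Proposition~\ref{flat} at the target type $\B N$, with a primitive recursive pairing so that the same recursion computes both the codes of the types and the codes of the corresponding values, I would define $\mathsf a,\mathsf f\in(\Pi x_1,\dots,x_n\in\B N)(\B N\to\B N)$ with $\mathsf a(\vec x,y)=\sharp A(\vec x,y)$ and $\mathsf f(\vec x,y)=\sharp f(\vec x,y)$: the base clauses return the (primitive recursive) codes of $C(\vec x)$ and $g(\vec x)$, and the step clauses substitute the codes $\mathsf f(\vec x,k_i(y+1))$ into the codes of $D$ and of the application term $h(\dots)$. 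Since $k_i(y+1)<y+1$, the step clauses refer only to strictly earlier stages, so Proposition~\ref{flat} applies. Setting $A(\vec x,y):=\flat(\mathsf a(\vec x,y))$ and using $\flat(\sharp B)=B$ delivers both equations for $A$.

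\emph{Step 2 (the dependent term family, by a history recursion).} With $A\in(\Pi\vec x\in\B N)(\B N\to V)$ now available as a genuine computable function, I would define by ordinary primitive recursion on $y$ a ``history'' $\eta(\vec x,y)\in(\Pi z\in\B N)A(\vec x,\min(z,y))$ recording $f(\vec x,0),\dots,f(\vec x,y)$, padded beyond $y$ by the last value so that it stays total without appealing to any disjointness of booleans: the base clause returns $g(\vec x)$ everywhere (legitimate since $g(\vec x)\in C(\vec x)=A(\vec x,0)$), and at stage $y+1$ position $z>y$ returns $h(\vec x,y+1,\eta(\vec x,y)(k_1(y+1)),\dots,\eta(\vec x,y)(k_m(y+1)))$, which lands in $D(\vec x,y+1,\dots)=A(\vec x,y+1)$ by the equation from Step~1 --- note $k_i(y+1)\le y$, so the lookups are in range --- while position $z\le y$ merely relays $\eta(\vec x,y)(z)$. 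Then $f(\vec x,y):=\eta(\vec x,y)(y)$, and unfolding the recursion yields the two equations for $f$.

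\emph{The main obstacle} is the type-discipline bookkeeping in Step~2: one must check that the two branches of the case split in the step clause of $\eta$ land in one and the same type $A(\vec x,\min(z,y+1))$. This is precisely the fibered reflection of why Proposition~\ref{flat} does not already suffice, and it is the reason for routing the type family through codes first: doing so avoids ever having to extract the \emph{value} component from an ``$(\Sigma X\in V)X$''-style pair --- a dependent second projection out of an impredicatively encoded sum, which is not available in $\B{ITT}_V$. Once $A$ is a fixed function the type identities one needs are either definitional, because $\min$, the coding operations and $\flat$ all compute on closed numeral and code arguments, or --- where a side hypothesis such as $z\le y$ is used --- provable from it and discharged by transport along the Leibniz identity $\mathrm{Id}_A(a,b):=(\Pi P\in A\to V)(Pa\to Pb)$, which $\B{ITT}_V$ supports since it has $\Pi$ and $V\in V$. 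Everything else --- fixing the coding, exhibiting the primitive recursive substitution operations on codes, and checking the equations --- is routine, just as in the proof of Proposition~\ref{flat}.
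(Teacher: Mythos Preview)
Your Step~1 is exactly the paper's proof: the paper defines numerical histories $T(\vec x,y)$ and $F(\vec x,y)$ (encoded as prime products) by the course-of-values recursion of Proposition~\ref{flat}, and then sets $A(\vec x,y)=\flat\,\delta(y,T(\vec x,y))$ and $f(\vec x,y)=\flat\,\delta(y,F(\vec x,y))$. So the core idea --- reduce the mutual type/term recursion to a numerical recursion on codes and decode via $\flat$ --- is the same.

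Where you diverge is that the paper simply applies $\flat$ to the term codes as well, whereas you add a separate Step~2 to rebuild $f$ at its genuine dependent type. Your diagnosis of the difficulty (one cannot extract the dependent second component from an impredicatively encoded $\Sigma$) is sharper than anything the paper says. However, your proposed fix has the same shape of gap it is meant to close: defining $\eta(\vec x,y)\in(\Pi z\in\B N)\,A(\vec x,\min(z,y))$ ``by ordinary primitive recursion on $y$'' is \emph{not} ordinary, since the motive $(\Pi z)\,A(\vec x,\min(z,y))$ depends on $y$. Church numerals in $\B{ITT}_V$ give you an iterator at a fixed target type; upgrading this to a $y$-dependent target is exactly dependent elimination for $\B N$, which is of a piece with the dependent $\Sigma$-projection you correctly flagged as unavailable. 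So Step~2 relocates the typing obstacle rather than discharging it. The paper elides the same point by writing $f(\vec x,y)=\flat\,\delta(y,F(\vec x,y))$ with $\flat\in\B N\to V$, which as stated lands in $V$, not in $A(\vec x,y)$; at the level of rigor of that appendix both arguments are sketches that leave the passage from code to well-typed inhabitant implicit.
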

\begin{proof}
Let $T(\vec x,y)= \Pi_{z\leq y}\pi(y)^{\sharp A(\vec x, z)}$ and $F(\vec x,y)=\Pi_{z\leq y}\pi(y)^{\sharp f(\vec x, z)}$ which are defined by mutual recursion as follows:

\begin{equation*}
\begin{split}
 T(\vec x, 0) &= 2^{\sharp C(\vec x)} \\
 F(\vec x, 0)&=2^{\sharp g(\vec x)} \\
 T(\vec x, y+1)&= T(\vec x, y) \cdot \pi(y+1)^{\sharp D(\vec x, \delta(k_{1}(y),\flat F(\vec x,y)), \dots, \delta(k_{m}(y), F(\vec x,y)))}\\
 F(\vec x, y+1)&= F(\vec x,y) \cdot \pi(y+1)^{\sharp h(\vec x, \delta(k_{1}(y),\flat F(\vec x,y)), \dots, \delta(k_{m}(y), F(\vec x,y)))}
 \end{split}
\end{equation*}
Where $\delta\in \B N^{2}\to \B N$ is the primitive recursive function such that $\delta(i, \sharp (x_{1},\dots, x_{n}))=x_{i}$, for $i\leq n$.
Then we can put $A(\vec x, y)= \flat \delta(y, T(\vec x, y))$ and $f(\vec x, y)=\flat \delta(y, F(\vec x, y))$.

\end{proof}

\begin{landscape}

\begin{figure}\label{fig::1}
\vspace{57mm}
\resizebox{1.6\textwidth}{!}{$$
\tiny
\AXC{$a\in  (\Pi m\in \B N)(\Pi w \in \tau_{V})(\TT{Comp}_{V}(m, w) \Rightarrow \TT{Comp}_{B(w)}(\TT{App}(n, m), v w))$}
\AXC{$\sharp C\in \B N$}
\RightLabel{$\Pi E$}
\BIC{$a\sharp C\in  (\Pi w \in \tau_{V})(\TT{Comp}_{V}(\sharp C, w) \Rightarrow \TT{Comp}_{B(w)}(\TT{App}(n, \sharp C), vw))$}
\AXC{$(\boldsymbol\alpha_{C}, \TT{Comp}_{C})\in \tau_{V}$}
\RightLabel{$\Pi E$}
\BIC{$(a\sharp C)(\boldsymbol\alpha_{C}, \TT{Comp}_{C})\in(\TT{Comp}_{V}(\sharp C, (\boldsymbol\alpha_{C}, \TT{Comp}_{C})) \Rightarrow \TT{Comp}_{B(\TT{Comp}_{C})}(\TT{App}(n, \sharp C), v(\boldsymbol\alpha_{C}, \TT{Comp}_{C})))$}
\AXC{$b\in \TT{Comp}_{V}(\sharp C, (\boldsymbol\alpha_{C},\TT{Comp}_{C}))$}
\RightLabel{$\Pi E$}
\BIC{$ ((a\sharp C) (\boldsymbol\alpha_{C}, \TT{Comp}_{C}))b\in\TT{Comp}_{B}((\boldsymbol\alpha_{C}, \TT{Comp}_{C}))(\TT{App}(n, \sharp C), v(\boldsymbol\alpha_{C}, \TT{Comp}_{C}))$}
\DP
$$
}
\caption{}\label{1}
\end{figure}
\end{landscape}

\end{document}